 \newtheorem{theorem}{Theorem}[section]
\newtheorem{lemma}[theorem]{Lemma}
\newtheorem{proposition}[theorem]{Proposition}
\theoremstyle{definition}
\newtheorem{example}[theorem]{Example}
\newtheorem{examples}[theorem]{Examples}
\newtheorem{definition}[theorem]{Definition}
\numberwithin{equation}{section}
\newcommand{\larc}[1]{\hspace{-.4ex}\overset{#1}{\frown}\hspace{-.4ex}}
\author{Carolina Benedetti\thanks{NSERC and CRC (N. Bergeron's ). Partially supported by Beca Mazda para el Arte y la Ciencia.}}
\title{Combinatorial Hopf algebra of superclass functions of type $D$}
\date{}
\begin{document}
\maketitle
\begin{abstract}
We provide a Hopf algebra structure on the space of superclass functions on the unipotent upper triangular group of type $D$ over a finite field based on a supercharacter theory constructed by Andr\'e and Neto in \cite{andre_2,andre}. Also, we make further comments with respect to types $B$ and $C$. Type $A$ was explored in  \cite{aim} by M. Aguiar et. al; thus this paper is a contribution to understand combinatorially the supercharacter theory of the other classical Lie types. 
 \end{abstract}

\section{Introduction}
\label{sec:in}

The problem of simultaneously reducing to canonical form two linear operators on a finite-dimensional space is a ``wild" problem in representation theory. This problem contains all classification matrix problems given by quivers (see \cite{wild}). In this sense, the classical representation theory for the type $A$ group $U_n(q)$ of unipotent $n\times n$ upper triangular matrices over a finite field is known to be wild. This makes, in some sense, hopeless any attempt to study the representation theory of the group $U_n(q)$. In his Ph.D. thesis C. Andr\'e started to develop a theory that approximates the representation theory of $U_n(q)$. Roughly speaking, by using certain linear combinations of irreducible characters and lumping together conjugacy classes under certain conditions, the resulting theory behaves very nicely (see \cite{andre_96, thiem}). This gave rise to the concept of  ``supercharacter theory". Later on, P. Diaconis and I. M. Isaacs extended this concept to arbitrary algebra groups (see \cite{diaconis}). Supercharacter theory of the group $U_n(q)$ has a rich combinatorics which connects this beautiful theory with classical combinatorial objects. As a matter of fact, in \cite{aim} a Hopf algebra structure is provided on the graded vector space \textbf{SC} of superclass characteristic functions over $U_n(q)$, for $n\geq 0$. Moreover, when $q=2$ this Hopf algebra is a realization of a well-known combinatorial Hopf algebra, namely, the Hopf algebra of symmetric functions in noncommuting variables (see \cite{sagan, bergeron_98}).

The reader familiar with the classical representation theory of the symmetric group $S_n$ will notice how this resembles the relationship between symmetric functions and the character theory of $S_n$. Also, supercharacters of $U_n(2)$ are indexed by set partitions of the set $[n] = \{1,2,\dots ,n\}$ and by \emph{labelled} set partitions for general $q$.

In this paper, we study combinatorially the supercharacter theory corresponding to the other classical Lie types $B,\; C$ and $D$, making emphasis on the latter. This study is based on the supercharacter theory constructed by Andr\'e and Neto in \cite{andre_06,andre_2}. These groups fail to be algebra groups unlike type $A$. However, we can regard them as subgroups of the convenient group of type $A$ and restrict the supercharacter theory of type $A$ to the respective subgroup. 

The paper is organized as follows. In Section \ref{sec_pre} we provide the reader with the basic definitions concerning  combinatorial Hopf algebras and supercharacters. In Section \ref{sec_3}, we give a combinatorial interpretation for the supercharacter theory of the group $U^{D}_{2n}(q)$ of even orthogonal unipotent upper triangular matrices with coefficients in the field $\mathbb F_q$ of characteristic $\geq 3$. This combinatorial interpretation uses \emph{labelled} $D_{2n}$\emph{-partitions} of the set $[\pm n] := \{1,\dots , n,-n,\dots ,-1\} $. More specifically, we use these partitions to index orbit representatives for superclasses and supercharacters of the group $U_{2n}^D(q)$. Then, we define the analog of $\textbf{SC}$ for type $D$ as follows:
\begin{align*}
\textbf{SC}^D &= \oplus_{n\geq 0}\textbf{SC}_{2n}^D\\
&= \oplus_{n\geq 0}\text{span}_{\mathbb C}\{\kappa _{\lambda}:\lambda\in D_{2n}(q)\}
\end{align*}
where $\kappa _{\lambda}$ denotes the superclass characteristic function indexed by the labelled $D_{2n}$-partition $\lambda$. Using a change of basis, we prove that the space $\textbf{SC}^D$ is endowed with a Hopf algebra structure. This Hopf algebra structure is in analogy to the one given for type $A$ in \cite{aim}. However, the product structure on $\mathbf{SC}^D$ is not raised directly from representation theory. The coalgebra structure is raised directly from representation theory by using restriction.

In the final section, we discuss briefly the supercharacter theory for types $B$ and $C$. Also we make some remarks concerning forthcoming work on the Hopf monoid structure that $\textbf{SC}^D$ carries, following the results in \cite{aguiar:2011}.

\section{Preliminaries}\label{sec_pre} 

We start by defining supercharacter theory for a finite group $G$. This definition, which can be stated in different ways, is due to Diaconis and Isaacs \cite{diaconis}.

\begin{definition}\label{superchar_def}
A \emph{supercharacter theory for $G$} consists of:

\begin{itemize}

\item A partition $\mathcal K$ of $G$
\item A set $\mathcal X$ of characters of $G$

\end{itemize}

such that the following holds:

\begin{itemize}
\item[1.]  $|\mathcal K|=|\mathcal X |$
\item[2.] every irreducible character of $G$ is a constituent of a unique $\chi\in\mathcal X$
\item[3.] the characters in $\mathcal X$ are constant on members of $\mathcal K$.

\end{itemize}
\end{definition}
\noindent The elements in $\mathcal X$ are scalar multiples of linear combinations of the form $\sum_{\psi\in X}\psi(1)\psi$ where $X$ is a subset of irreducible characters of $G$, by \cite[Lemma 2.1]{diaconis}. 

\vspace{.3cm}
\noindent\emph{Remark:} Definition \ref{superchar_def} is equivalent to say that \begin{math}\text{span}_{\mathbb C}\{\sum_{g\in K}g: K\in \mathcal K\}\end{math} is a subalgebra of $Z(\mathbb C G)$ with unit 1. Given such a partition $\mathcal K$, there exists a unique $\mathcal X$, up to isomorphism, with the desired properties.

\begin{examples}
\begin{itemize}
\item Every group is endowed with the supercharacter theory where the set of superclasses $\mathcal K$ consists of the usual conjugacy classes and the set of supercharacters $\mathcal X$ is formed by the irreducible characters of $G$. 
\item Similarly, the trivial supercharacter theory of $G$ is such that 
$\mathcal K=\{\{1\},\; G-\{1\}\}$ and $\mathcal \chi=\{1,\rho _G-1\}$, where $\rho _G$ is the regular representation. 
\item A less trivial example is given by the cyclic group of order $2^n$, where $n\geq 2$. It is not hard to see that lumping together the elements of $G$ by their order, gives a set of superclasses $K$, whose corresponding supercharacters are formed by adding together all the $d$-primitive roots of unity for each $d|n$.
\end{itemize}
\end{examples}

This paper explores the particular supercharacter theory constructed by Andr\'e and Neto in \cite{andre_06} of the classical group $U_{2n}^D(q)$ of $2n\times 2n$ unipotent upper triangular matrices of type $D$. Here, we refer to this construction as \emph{the} supercharacter theory of type $D$, since it is the one we are interested in. We regard $U_{2n}^D(q)$ as a subgroup of the group $U_{2n}(q)$ of $2n\times 2n$ unipotent upper triangular matrices, which is an \emph{algebra group} as defined below (see \cite{isaacs}).
\vspace{.3cm}

\begin{definition}

Let $J$ be a finite dimensional associative nilpotent $\mathbb F$-algebra and let $G$ denote the set consisting of  formal objects of the form $1+a$ where $a\in J$. Then $G=1+F$ is a group, where the multiplication is given by $(1+a)(1+b)=1+a+b+ab$. The group $G$ is the \emph{algebra group} based on $J$.
\end{definition}

\noindent As an example, denote by $\mathfrak u_n$ the algebra of nilpotent upper triangular matrices associated to the group $U_n(q)$. Then we see that $U_n(q)=I+\mathfrak u_n$, and thus $U_n(q)$ is an algebra group.

The supercharacter theory for the group $U_n(q)$ has a very nice combinatorial interpretation. Its superclasses are indexed by labelled set partitions of type $A$ as well as its supercharacters  (see \cite{aim}). 
In analogy with type $A$, in the next section we describe the supercharacter theory for the group $U_{2n}^D(q)$ using \emph{labelled $D_{2n}$-partitions}, though as mentioned in the introduction, $U_{2n}^D(q)$ is not an algebra group. Before that, we give a quick intro to combinatorial Hopf algebras. For a further reading on this topic, see \cite{abs_hopf}.

\subsection{Combinatorial Hopf algebras}\label{hopf_algebra}

Let $\mathcal A$ be a vector space over a field $\mathbb K$. We say that $\mathcal A$ is an associative algebra with unit $1$  if $\mathcal A$ has a linear map $m:\mathcal A\otimes\mathcal A\rightarrow\mathcal A$ such that $m\circ(m\otimes \text{Id})=m\circ(\text{Id}\otimes m)$ where $\text{Id}$ is the identity map in $\mathcal A$. The unit can also be associated with a linear map $u :  \mathbb K\rightarrow\mathcal A$ such that $ t\mapsto t\cdot 1$.
\noindent The maps $m$ and $u$ must be compatible in the sense that 
$$
m\circ(\text{Id}\otimes u) = m\circ(u\otimes\text{Id}) = \text{Id}
$$
\noindent On the other hand, a \emph{coalgebra} is a vector space $D$ over $\mathbb K$ with a coproduct $\Delta: D\otimes D\rightarrow D$ and a counit $\epsilon:D\rightarrow \mathbb K$ which are $\mathbb K$-linear maps. The coproduct must be coassociative in the sense that $
(\Delta\otimes \text{Id})\circ\Delta=(\text{Id}\otimes\Delta)\circ\Delta$ and must be compatible with $\epsilon$:
$$
(\epsilon\otimes \text{Id})\circ\Delta=(\text{Id}\otimes\epsilon)\circ\Delta=\text{Id}
$$

\noindent If an algebra $(\mathcal A,m,u)$ has also a coalgebra structure given by $\Delta,\epsilon$, we say that $\mathcal A$ is a \emph{bialgebra} if $\Delta,\epsilon$ are algebra homomorphisms.

\null

\begin{definition}\label{hopf_algebra}
A \emph{Hopf algebra} $\mathcal A$ is a bialgebra together with a linear map $S:\mathcal A\rightarrow\mathcal A$ called antipode. The map $S$ satisfies
$$
\sum_kS(a_k)b_k=\epsilon(a)\cdot 1=\sum_ka_kS(b_k) \text{ \;\;\; where\;\;\;   }\Delta(a)=\sum_ka_k\otimes b_k
$$
\end{definition}

\noindent We say that a bialgebra $\mathcal A$ is \emph{graded} if there exists a direct sum decomposition
$$
\mathcal A=\bigoplus_{k\geq 0}A_k
$$
such that $A_i\otimes A_j\subseteq A_{i+j}$, $u(\mathbb K)\subseteq A_0$, $\Delta (A_i)\subseteq\oplus_{i=0}^nA_i\otimes A_{n-i}$ and $\epsilon (A_n)=0$ for $n\geq 1$. Finally, we say that $\mathcal A$ is connected if $A_0\cong\mathbb K$.

There are few different notations of a combinatorial Hopf algebra in the literature, but in this paper we say that $\mathcal A$ is a \emph{combinatorial Hopf algebra} if $\mathcal A$ is a graded and connected bialgebra with antipode and such that $\mathcal A$ has a distinguished basis with positive structure constants \cite{bll}, i.e., a distinguished basis that multiplies/comultiplies positively. 

\section{Supercharacter theory of type $D$}\label{sec_3}

The supercharacter theory of type $D$ this paper considers is due to Andr\'e and Neto \cite{andre_06,andre_2}. Here, we give a combinatorial interpretation of their algebro-geometric construction. From now on, $\mathbb F_q$ will denote a field of characteristic $p\geq 3$ and order $q=p^r$ for some integer $r\geq 1$. Also, denote by $\mathbb F^*_q$ the multiplicative group of nonzero elements of the field $\mathbb F^q$.  The group $U_{2n}^D(q)$ corresponds to even orthogonal unipotent upper triangular matrices with coefficients in $\mathbb F_q$ and can be described as the following set (see \cite{carter}):

\begin{equation*}
U^D_{2n}(q)=\Biggr\{\begin{pmatrix}
P & PQ\\0 & JP^{-t}J
\end{pmatrix} : P\in U_n(q),\; Q\in M_n(q),\; JQ^{t}J=-Q \Biggr\},
\end{equation*}
\noindent where $M_n(q)$ is the set of $n\times n$ matrices over $\mathbb F_q$ and $J$ is the $n\times n$ matrix with ones in the antidiagonal and zeros elsewhere.

We will drop the subindex $2n$ from our notation when the size of matrices is clear from context. In order to describe the supercharacters of $U^D(q)$, we make use of an associated nilpotent algebra $\mathfrak u^D(q)$. The algebra $\mathfrak u^D(q)$ is given by

\begin{equation*}
\mathfrak u^D(q)=\Biggr\{\begin{pmatrix}
R & Q\\0 & -JR^{t}J
\end{pmatrix} : R\in U_n(q)-I_n,\; Q\in M_n(q),\; JQ^{t}J=-Q \Biggr\}.
\end{equation*}

\noindent with $M_n(q)$ and $J$ as before. We will make use of the total order 
\begin{equation*}
1\prec \cdots\prec n\prec -n\prec\cdots\prec -1
\end{equation*}
to index the columns and rows of matrices in $U^D(q)$ and in $\mathfrak u^D(q)$, from left to right and top to bottom.

A vector space basis for $\mathfrak u^D(q)$ over $\mathbb F_q$ is given by the matrices $\{y_{\alpha}\}_{\alpha}$ where $\alpha$ runs over the set of positive roots $\Phi ^+$ of type $D$, given by 
\begin{equation*}
\Phi^+=\{e_i\pm e_j:1\leq i<j\leq n\}
\end{equation*}

\noindent and $y_{\alpha}$ denotes the matrix 
\begin{equation*}
y_{\alpha}=\left\{
  \begin{array}{l l}
e_{i,j}-e_{-j,-i} & \quad\text{if $\alpha=e_i-e_j$}\\
e_{i,-j}-e_{j,-i} & \quad\text{if $\alpha=e_i+e_j$}\\
\end{array}\right.
\end{equation*}
where $e_{i,j}\in\mathfrak u^D(q)$ has 1 in position $i,j$ and zeros elsewhere. Now define the support of $y_{\alpha}$ by
\begin{equation*}
\text{supp}(y_{\alpha})=\left\{
  \begin{array}{l l}
(i,j),\;(-j,-i) & \quad\text{if $\alpha=e_i-e_j$}\\
(i,-j),\;(j,-i) & \quad\text{if $\alpha=e_i+e_j$}\\
\end{array}\right.
\end{equation*}
Notice that this definition can be extended linearly to the whole of $\mathfrak u^D(q)$.

\null
\noindent Denote by $[\pm n]$ the set $\{\pm 1, \pm 2, \dots , \pm n\}$. Combinatorially, linear combinations of the matrices $y_{\alpha}$ with at most one nonzero entry in every row and column can be seen as \emph{labelled} $D_{2n}$-\emph{partitions} or simply $D_{2n}(q)$-\emph{partitions}, which consists of triples $(i,j,a)$ where $i,j\in [\pm n]$ and $a\in\mathbb F_q^*$. Any triple of this form is called a \emph{labelled arc} and will be represented as ${\small i}\larc{a} {\small j}$. Thus, we have the following definition.

\begin{definition}\label{D-partition}
A \emph{$D_{2n}(q)$-partition} $\lambda$ of $[\pm n]$ is a set of labelled arcs in $[\pm n]$ such that for $j\neq -i$:
\begin{itemize}
 \item[$(a).$] If ${\small i}\larc{a} {\small j}\in\lambda$ then ${\small -j}\larc{-a} {\small -i}\in\lambda$\\
 \item[$(b).$] If ${\small i}\larc{a} {\small j}\in\lambda$ and $i\prec k\prec j$ then ${\small i}\larc{b} {\small k}\notin\lambda,\;{\small k}\larc{b} {\small j}\notin\lambda$. 
\end{itemize} 
 We write $\lambda\in D_{2n}(q)$ to indicate that $\lambda$ is a $D_{2n}(q)$-partition.
\end{definition}
\noindent The number of $D_{2n}(q)$-partitions is given in \cite{marberg} where the notion of labelled $D$ partitions was previously defined, as well as their analog in type $B$.

For $\lambda\in D_{2n}(q)$, we define the corresponding matrix $y_{\lambda}\in\mathfrak u^D(q)$ by
\begin{equation}\label{partition_matrix}
y_{\lambda} = \displaystyle\sum_{\small{i}\larc{a}\small{j}\in\lambda}ae_{i,j}.
\end{equation}

\noindent The map $\lambda\mapsto y_{\lambda}$ defines a bijection between set partitions in $D_{2n}(q)$ and matrices in $\mathfrak u^D(q)$ with at most one non zero element in every row and column.

Every $\lambda\in D_{2n}(q)$ can be written uniquely as $\lambda = \lambda^+\cup\lambda ^-$, where:
\begin{itemize}
\item $\lambda^+\cap\lambda^-=\emptyset$
\item ${\small i}\larc{a} {\small j}\in\lambda^+$ if and only if ${\small -j}\larc{-a} {\small -i}\in\lambda^-$ where $i> 0$ and $i<| j |$
 \end{itemize}

\noindent In view of this, $\lambda$ is completely determined by $\lambda^+$ (or $\lambda^-$). Thus, every arc 
${\small i}\larc{a} {\small j}\in\lambda^+$ can be represented by the triple $\{(i,j,a)\}$. In this case, the triple $\{(-j,-i,-a)\}\in\lambda^-$.

\subsection{Superclasses and supercharacters}

In this section we describe combinatorially the superclasses and supercharacters of $U^D_{2n}(q)$ using $D_{2n}(q)$-partitions and keeping in mind that $U^D_{2n}(q)$ is a subgroup of $U_{2n}(q)$. Using algebraic varieties, Andr\'e and Neto proved that supercharacters and superclasses of the group $U_{2n}^D(q)$ are indexed by matrices in $\mathfrak u^D(q)$ with at most one nonzero element in every row and column (see \cite{andre_06}). Thus, they can be indexed using $D_{2n}(q)$-partitions as well.

The group $U_{2n}(q)$ acts on its nilpotent algebra $\mathfrak u_{2n}(q)$ by left and right multiplication. It can be shown that when adding the identity matrix $I_{2n}$ to each one of these orbits we get the superclasses of $U_{2n}(q)$ (see \cite{diaconis}). 

 Let $\lambda\in\ D_{2n}(q)$ and let $y_{\lambda}$ as in (\ref{partition_matrix}). Since $\mathfrak u^D(q)\subset \mathfrak u_{2n}(q)$ we can consider the orbit
 \begin{equation*}
 V_{\lambda}=U_{2n}(q)y_{\lambda} U_{2n}(q)\in\mathfrak u_{2n}(q).
 \end{equation*}
 
\noindent Notice that $V_{\lambda}$ is not necessarily in $\mathfrak u^D(q)$. However, since $V_{\lambda}+I_{2n}$ is a superclass in $U_{2n}(q)$ and $U^D_{2n}(q)$ is a subgroup of $U_{2n}(q)$,  then we have the following definition.
\begin{definition}
Let $V_{\lambda}$ as above. The \emph{superclass} in $U_{2n}^D(q)$ associated to $\lambda$ is denoted by $K_{\lambda}$ and is defined as $K_{\lambda}=U_{2n}^D(q)\cap (V_{\lambda}+I_{2n})$. 
\end{definition}
As mentioned in the introduction $U^D_{2n}(q)$ is not an algebra group, i.e., $U^D_{2n}(q)\neq I_{2n}+\mathfrak u_{2n}^D(q)$. Yet there is a bijective correspondence between $U^D_{2n}(q)$ and $\mathfrak u^D(q)$. This bijection is provided by the following lemma of Andr\'e and Neto.

\begin{lemma}\cite[Lemma 2.3]{andre_06}\label{bijection}
 Let $\lambda$ be a $D_{2n}$-partition and let $I$ denote the identity matrix of the corresponding size. Put $x$ and $y$ as
\begin{equation*}\label{bijection}
x=\begin{pmatrix}
P & \vline & PQ\\
\hline
0 &\vline & JP^{-t}J
\end{pmatrix}\in U^D(q)\;\;\;\;\text{ and }
\;\;\;\;\;y=\begin{pmatrix}
P-I &\vline & Q\\
\hline
0 &\vline & -J(P-I)^{t}J
\end{pmatrix}\in \mathfrak u^D(q).
\end{equation*}
Then $x\in K_{\lambda}$ if and only if $y\in V_{\lambda}$.
\end{lemma}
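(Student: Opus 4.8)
The plan is to reduce the biconditional to a single statement about orbits in the ambient type-$A$ algebra: that $x-I$ and $y$ lie in the same two-sided orbit $U_{2n}(q)\,(\,\cdot\,)\,U_{2n}(q)$ inside $\mathfrak u_{2n}(q)$. Once this is established the lemma follows formally, since the sets $V_\mu=U_{2n}(q)y_\mu U_{2n}(q)$ are single two-sided orbits and they partition $\mathfrak u_{2n}(q)$; hence $x-I\in V_\lambda$ if and only if $y\in V_\lambda$. As $x\in U^D(q)$ holds by hypothesis, this yields $x\in K_\lambda=U^D_{2n}(q)\cap(V_\lambda+I_{2n})$ iff $x-I\in V_\lambda$ iff $y\in V_\lambda$, which is exactly the claim. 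So the whole content is to exhibit explicit unipotent $u,v\in U_{2n}(q)$ with $y=u(x-I)v$.

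First I would factor $x$ as a block-diagonal matrix times a block-unipotent one,
$$x=\begin{pmatrix}P & 0\\ 0 & JP^{-t}J\end{pmatrix}\begin{pmatrix}I & Q\\ 0 & I\end{pmatrix}=:DE,$$
where both factors lie in $U_{2n}(q)$; here one checks $JP^{-t}J\in U_n(q)$, using that conjugation by the antidiagonal $J$ turns the unipotent lower-triangular $P^{-t}$ into a unipotent upper-triangular matrix. Left-multiplying $x-I=DE-I$ by $D^{-1}$ strips off the awkward blocks:
$$D^{-1}(x-I)=E-D^{-1}=\begin{pmatrix}I-P^{-1} & Q\\ 0 & I-JP^{t}J\end{pmatrix}=\begin{pmatrix}P^{-1}R & Q\\ 0 & -JR^{t}J\end{pmatrix},$$
where $R=P-I$, using $I-P^{-1}=P^{-1}R$ together with $JP^{t}J=I+JR^{t}J$. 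The lower-right and upper-right blocks now already agree with those of $y$.

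It then remains only to correct the upper-left block from $P^{-1}R$ to $R$. For this I would right-multiply by $v=\begin{pmatrix}P & 0\\ 0 & I\end{pmatrix}\in U_{2n}(q)$ and use that $R$ commutes with $P=I+R$, so $P^{-1}RP=R$ while the remaining blocks are untouched; this gives $D^{-1}(x-I)v=y$. Taking $u=D^{-1}$ completes the identity $y=u(x-I)v$ and hence the orbit coincidence.

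The main obstacle I anticipate is locating these conjugators in the first place: the difference between $x-I$ and $y$ looks genuinely nonlinear, since the blocks $PQ$ versus $Q$ and $JP^{-t}J-I$ versus $-JR^{t}J$ differ by higher-order terms in $R$, and it is not a priori clear that a single unipotent change of variables removes all of it simultaneously. The clean resolution is the factorization $x=DE$, after which everything collapses to the commuting relation $RP=PR$; the only remaining care is the bookkeeping that each conjugator genuinely lies in $U_{2n}(q)$, which rests on the order-reversing effect of conjugation by $J$. One could instead verify the orbit coincidence through the rank invariants of the lower-left submatrices that index type-$A$ orbits, but the explicit conjugators seem both shorter and more transparent.
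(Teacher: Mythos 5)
Your argument is correct, and it is worth noting at the outset that the paper itself supplies no proof of this statement: it is imported verbatim from Andr\'e--Neto \cite[Lemma 2.3]{andre_06}, so there is no in-paper argument to compare against. Your reduction is the right one: $K_\lambda=U^D_{2n}(q)\cap(V_\lambda+I_{2n})$ and $x\in U^D(q)$ by hypothesis, so the lemma is equivalent to ``$x-I\in V_\lambda\iff y\in V_\lambda$,'' and since $V_\lambda=U_{2n}(q)\,y_\lambda\,U_{2n}(q)$ is by definition a single orbit of the two-sided action (the partition property of the family $\{V_\mu\}$ is not even needed), it suffices to place $x-I$ and $y$ in the same orbit. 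Your explicit conjugators check out: $D=\left(\begin{smallmatrix}P&0\\0&JP^{-t}J\end{smallmatrix}\right)$ and $E=\left(\begin{smallmatrix}I&Q\\0&I\end{smallmatrix}\right)$ both lie in $U_{2n}(q)$ (conjugation by the antidiagonal $J$ converts the unipotent lower-triangular $P^{-t}$ into a unipotent upper-triangular matrix), the computation $D^{-1}(x-I)=E-D^{-1}=\left(\begin{smallmatrix}P^{-1}R&Q\\0&-JR^tJ\end{smallmatrix}\right)$ with $R=P-I$ is right, and right multiplication by $\left(\begin{smallmatrix}P&0\\0&I\end{smallmatrix}\right)$ fixes the top-left block via $P^{-1}RP=R$ while leaving $Q$ and $-JR^tJ$ untouched, yielding $y=D^{-1}(x-I)v$. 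This gives a short, self-contained verification of a fact the paper merely cites, which is a genuine addition; the only cosmetic improvement would be to state explicitly that $\mathfrak u_{2n}(q)$ is stable under the two-sided $U_{2n}(q)$-action (product of upper-triangulars with one nilpotent factor is nilpotent upper-triangular), so that ``same orbit'' is a well-defined equivalence on $\mathfrak u_{2n}(q)$.
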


\vspace{.3cm}

\noindent To illustrate this lemma, let us consider the following example:

\begin{example}\label{part2} Let $n=5$ and let $\lambda$ be the $D_{2n}(q)$-partition given by

\begin{equation*}
\lambda =\begin{tikzpicture}[baseline=.2cm]
	\foreach \x in {1,2,3,4,5,6,7,8} 
		\node (\x) at (\x/2,0) [inner sep=0pt] {$\bullet$};
	\node at (1/2,-.2) {$\scriptstyle 1$};
	\node at (2/2,-.2) {$\scriptstyle 2$};
	\node at (2.9/2,-.2) {$\scriptstyle 3$};
	\node at (3.9/2,-.2) {$\scriptstyle 4$};
	\node at (4.8/2,-.2) {$\scriptstyle -4$};
	\node at (5.8/2,-.2) {$\scriptstyle -3$};
	\node at (6.8/2,-.2) {$\scriptstyle -2$};
	\node at (7.8/2,-.2) {$\scriptstyle -1$};
	\draw (1) .. controls (1.25/2,.35) and (1.75/2,.35) ..  node [above=-2.2pt] {$\scriptstyle a$} (2); 
	\draw (2) .. controls (2.25/2,.5) and (2.75/2,.5) ..  node [above=-2pt] {$\scriptstyle b$} (3);
	\draw (3) .. controls (3.75/2,.5) and (4.25/2,.5) ..  node [above=-2pt] {$\scriptstyle c$} (5);
	\draw (4) .. controls (4.75/2,.5) and (5.25/2,.5) ..  node [above=-2pt] {$\scriptstyle -c$} (6);
	\draw (6) .. controls (6.25/2,.4) and (6.75/2,.4) ..  node [above=-2pt] {$\scriptstyle -b$} (7);
	\draw (7) .. controls (7.25/2,.4) and (7.75/2,.4) ..  node [above=-2pt] {$\scriptstyle -a$} (8); 
\end{tikzpicture}
\end{equation*}

\noindent A natural representative for the orbit $V_{\lambda}$ is given by the corresponding $y_{\lambda}$. In this example we have
\begin{align*}
y_{\lambda} = \begin{pmatrix}
 0 & a & 0 & 0 &\vline & 0 & 0 & 0 & 0 \\
  & 0 & b & 0 &\vline & 0 & 0 & 0 & 0 \\
  &  & 0 & 0 &\vline & c & 0 & 0 & 0\\
  &  &  & 0 &\vline & 0 & -c &  0 & 0\\
 \hline
   &  & &  &\vline & 0 & 0 & 0 & 0\\
   &  &  &  &\vline &  & 0 & -b & 0\\
  &  &  &  &\vline &  &  & 0 & -a\\
  &  &  &  &\vline &  &  &  & 0\\
\end{pmatrix}
\;\;\;\;\;\; \text{ and }\;\;\;\;
x_{\lambda} = \begin{pmatrix}
 1 & a & 0 & 0 &\vline & 0 & 0 & 0 & 0 \\
  & 1 & b & 0 &\vline & 0 & 0 & 0 & 0 \\
  &  & 1 & 0 &\vline & c & bc & 0 & 0\\
  &  &  & 1 &\vline & 0 & -c &  0 & 0\\
 \hline
   &  &  &  &\vline & 1 & 0 & 0 & 0\\
   &  &  &  &\vline &  & 1 & -b & 0\\
  &  &  &  &\vline &  &  & 1 & -a\\
  &  &  &  &\vline &  &  &  & 1\\
\end{pmatrix}
\end{align*}
where $x_{\lambda}$ is the matrix in $K_{\lambda}$ given by the lemma \ref{bijection}.
\end{example}

Henceforth we denote by $\theta$ a fixed nontrivial homomorphism from the additive group of $\mathbb F_q$ to $\mathbb C^*$. The following theorem of Andr\'e and Neto defines the supercharacters of type $D$.

\begin{theorem}\cite[Theorem 5.3]{andre}
Let $\lambda$ be a $D_{2n}(q)$-partition and let $x_\mu$ be the superclass associated to the $D_{2n}(q)$-partition $\mu$. Then the complex valued function $\chi^{\lambda}$ which is constant on superclasses of $U^D_{2n}(q)$ is given by
	
\begin{equation}\label{supercharacter_formula}
\chi^{\lambda}(x_\mu)=
\left\{
  \begin{array}{l l}
    \dfrac{\chi^{\lambda}(1)}{q^{|\{k \stackrel \frown {} l\in\mu^+ | i\prec k\prec l\prec j , i \stackrel \frown {} j\in\lambda^+ \}|}}
  \displaystyle\large\prod_{{\small i}\larc{a} {\small j} \in\lambda^+,\; 
    {\small i}\larc{b} {\small j}\in\mu^+}
    \theta (ab) & \quad \text{if }{\small i}\larc{a} {\small k}\in\lambda \text{ and }i\prec l\prec k,\\ & \;\;\text{           then }{\small i}\larc{b} {\small l},{\small l}\larc{b} {\small k}\notin\mu ,\\ \vspace{.2cm}
  0 & \quad \text{otherwise.}\\
  \end{array} \right.
\end{equation}
The set of functions $\{\chi^{\lambda}:\lambda\in D_{2n}(q)\}$ forms a supercharacter theory for the group $U^D_{2n}(q)$.
\end{theorem}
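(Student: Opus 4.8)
The plan is to verify directly the three conditions of Definition \ref{superchar_def} for the pair $(\mathcal K,\mathcal X)$ with $\mathcal K=\{K_\lambda:\lambda\in D_{2n}(q)\}$ and $\mathcal X=\{\chi^\lambda:\lambda\in D_{2n}(q)\}$, while exploiting the fact that $U^D_{2n}(q)$ sits inside the type-$A$ algebra group $U_{2n}(q)$, whose supercharacter theory is already available from \cite{aim,diaconis}. Condition 3 is essentially built into formula \eqref{supercharacter_formula}: the right-hand side depends on the argument only through the $D_{2n}(q)$-partition $\mu$ labelling its superclass, so each $\chi^\lambda$ is by construction constant on the members of $\mathcal K$. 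The only thing to check here is well-definedness, i.e. that $\chi^\lambda(x_\mu)$ does not depend on the chosen representative $x_\mu\in K_\mu$; this is immediate once we know, via Lemma \ref{bijection} together with the bijection $\lambda\mapsto y_\lambda$ of \eqref{partition_matrix}, that $K_\mu$ is exactly the set of group elements whose associated algebra element lies in $V_\mu$.

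For condition 1 I would argue that both indexings $\lambda\mapsto K_\lambda$ and $\lambda\mapsto\chi^\lambda$ are injective, so that $|\mathcal K|=|D_{2n}(q)|=|\mathcal X|$. Injectivity on the superclass side follows from the bijection between $D_{2n}(q)$-partitions and matrices in $\mathfrak u^D(q)$ having at most one nonzero entry per row and column, together with Lemma \ref{bijection}. Injectivity on the supercharacter side is a by-product of the orthogonality established for condition 2, since mutually orthogonal nonzero functions are linearly independent, hence pairwise distinct.

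Condition 2 is the heart of the matter, and I would split it into two steps. First, I would show that each $\chi^\lambda$ is a genuine character of $U^D_{2n}(q)$, not merely a superclass function: the cleanest route, in line with the restriction strategy announced in the introduction, is to identify $\chi^\lambda$ with the restriction to $U^D_{2n}(q)$ of the type-$A$ supercharacter attached to the same data inside $U_{2n}(q)$ (equivalently, to realize it as the trace of the module obtained by restricting the corresponding $U_{2n}(q)$-orbit module). Since restriction of a character is a character, $\chi^\lambda$ is a nonnegative integer combination of $\mathrm{Irr}(U^D_{2n}(q))$. Second, I would compute the inner products $\langle\chi^\lambda,\chi^\mu\rangle=|U^D_{2n}(q)|^{-1}\sum_{\nu}|K_\nu|\,\chi^\lambda(x_\nu)\overline{\chi^\mu(x_\nu)}$ and show they vanish for $\lambda\neq\mu$. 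Expanding with \eqref{supercharacter_formula}, the sum factors over arcs, and the off-diagonal cancellation is driven by the orthogonality $\sum_{a\in\mathbb F_q}\theta(ab)=0$ for $b\neq 0$. Orthogonality forces the irreducible constituents of distinct $\chi^\lambda$ to be disjoint; combining this with condition 1 and the mass identity $\sum_{\lambda}\chi^\lambda(1)=|U^D_{2n}(q)|$ (read off from the explicit degrees, and which saturates $\sum_{\psi\in\mathrm{Irr}}\psi(1)^2=|U^D_{2n}(q)|$ precisely when covering holds) shows that the constituents exhaust $\mathrm{Irr}(U^D_{2n}(q))$, so every irreducible lies in exactly one $\chi^\lambda$.

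I expect the orthogonality computation to be the main obstacle. The difficulty is twofold: the type-$D$ folding constraints, whereby $i\larc{a}j\in\lambda^+$ is mirrored by $-j\larc{-a}-i\in\lambda^-$ and the condition $JQ^tJ=-Q$ restricts the free entries, force the arcs to be handled in symmetric pairs rather than independently; and the vanishing (nesting) condition in \eqref{supercharacter_formula} couples the support of $\chi^\lambda$ to the superclass sizes $|K_\nu|$, so the factorization over arcs is not completely clean. I would organize the computation by first determining $|K_\nu|$ from Lemma \ref{bijection} (counting the free parameters surviving in $x_\nu$ after imposing $JQ^tJ=-Q$), then grouping the character sum along the orbits of $i\mapsto -i$, and finally reducing each factor to a Gauss-type sum in $\theta$. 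Alternatively, taking the André--Neto algebro-geometric result that $\mathcal K$ and $\mathcal X$ already constitute a supercharacter theory as given, the Remark after Definition \ref{superchar_def} guarantees that $\mathcal X$ is determined up to scalars by $\mathcal K$, so the argument reduces to checking that \eqref{supercharacter_formula} returns the correct values on a chosen set of superclass representatives, a finite if delicate verification.
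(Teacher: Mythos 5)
The first thing to be said is that the paper contains no proof of this statement: the theorem is imported verbatim from Andr\'e and Neto \cite[Theorem 5.3]{andre} (see also \cite{andre_06}), where it is established by an algebro-geometric orbit argument, and the present paper uses it purely as a black box. So your proposal cannot be compared with an internal argument; it has to stand on its own as a reconstruction of the Andr\'e--Neto result, and as written it does not. It is a strategy outline in the style of Diaconis--Isaacs (verify the three axioms of Definition~\ref{superchar_def}, with orthogonality doing the real work), which is indeed the right shape of argument, but every genuinely hard step is announced rather than carried out.

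Concretely: (i) you reduce the claim that $\chi^\lambda$ is a character to identifying it with the restriction of a type-$A$ supercharacter, but you never exhibit which type-$A$ supercharacter restricts to which $\chi^\lambda$, nor verify that this restriction is computed by formula \eqref{supercharacter_formula}; that identification --- including the normalizing power of $q$, the nesting condition, and the way the folding $i\mapsto -i$ pairs the type-$A$ arcs --- is precisely the content of the Andr\'e--Neto computation. (ii) The orthogonality computation, which you yourself call ``the main obstacle,'' is not performed; the difficulties you correctly list (arcs forced into $\pm$ pairs by the condition $JQ^tJ=-Q$, the coupling of the nesting condition to the superclass sizes $|K_\nu|$) are exactly where the proof lives, so flagging them is not a substitute for resolving them. (iii) The exhaustion step is broken even granting the mass identity $\sum_\lambda\chi^\lambda(1)=|U^D_{2n}(q)|$: pairwise orthogonal characters with disjoint constituent sets and degrees summing to $|G|$ need not cover $\mathrm{Irr}(G)$ unless one also bounds the multiplicities (e.g.\ shows each $\chi^\lambda$ is a subcharacter of the regular character, or shows $\sum_\lambda\chi^\lambda=\rho_G$ directly); large multiplicities on few constituents could saturate the degree count while missing irreducibles. (iv) Your fallback in the final sentence --- take the Andr\'e--Neto supercharacter theory as given and merely check values on representatives --- is circular as a proof of this theorem, since the theorem \emph{is} that statement. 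If you want a self-contained proof you must actually execute the Gauss-sum orthogonality computation and the constituent-counting argument; otherwise the honest course, which is what the paper does, is to cite \cite{andre,andre_06}.
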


A few remarks are worth mentioning about some of the combinatorial properties of supercharacters. For an algebraic proof see \cite{andre_06}:
\begin{itemize}
\item 
$\chi^{\lambda}(1)= 
\left\{
  \begin{array}{l l}
  q^{j-i-1} & \quad\text{ if } j\preceq n\\
  q^{2n-i-j}& \quad \text{ otherwise}\\
  \end{array} \right.
  $\;\;when $\lambda^+=\{(i,j,a)\}$ is a single arc. 
  \item $\chi^{\lambda}=\displaystyle\large\prod_{\lambda_{ij}\in\lambda}\chi^{\lambda_{ij}}$
\;\;  where $\lambda_{ij}=\{{\small{i}}\larc{a}{\small{j}},{\small{-j}}\larc{-a}{\small{-i}}\}\in\lambda$. Thus, $\chi^{\lambda}(1)=1$ if and only if $j=i+1$ for every $\lambda_{ij}$.
  \end{itemize}

\subsection{Product and coproduct}

Let $\mathbf{SC}^D_{2n}$ be the vector space of superclass functions over the group $U_{2n}^D(q)$. This is the space of functions $\alpha:U_{2n}^D(q)\rightarrow\mathbb C$ that are constant on superclasses. Now that we know how superclasses and supercharacters look like as matrices and as partitions, we will define a product and a coproduct on the graded vector space $\textbf{SC}^D=\bigoplus_{n\geq 0} \mathbf{SC}^D_{2n}$. By convention, $\mathbf{SC}^D_0 = \mathbb C$. As proved in \cite[Theorem 4.1]{andre}, the supercharacters of $U_{2n}^D(q)$ form a basis for $\mathbf{SC}^D_{2n}$. 

\noindent Given $\lambda\in D_{2n}(q)$ define $\kappa_{\lambda}\in\mathbf{SC}^D_{2n}$ as  the function with the formula
\begin{equation*}
\kappa_{\lambda}(x_{\mu})=\left\{
  \begin{array}{l l}
  1 & \quad\text{ if }x_{\mu}\text{ is in the superclass of }x_{\lambda}\\
  0 & \quad\text{ otherwise}
   \end{array} \right.
\end{equation*}
for $\mu\in D_{2n}(q)$. These \emph{superclass characteristic functions} of course form another basis of $\mathbf{SC}^D$

First, we will endow the vector space $\mathbf {SC}^D$with a coalgebra structure and thus we want to define a coproduct.     As before we denote by $[\pm n]$ the set $\{\pm 1, \pm 2, \dots , \pm n\}$ so that if $A\subseteq [n]$ then  $[\pm A] = A\cup -A$ where $-A=\{-i\;:\;i\in A\}$. We start by giving the following definition.
\begin{definition}
Let $J=(J_1|\dots |J_r)$ be a sequence of subsets of $[\pm n]$. We call $J$ a \emph{$D$-set partition} of $[\pm n]$ if for all $i\neq j$ such that $1\leq i,j\leq r$ we have that $J_i=-J_i$, $J_i\cap J_j=\emptyset$ and $\cup_iJ_i=[\pm n]$.
\end{definition}

Remark that for a $D$-set partition $J=(J_1|\cdots |J_r)$,  each part $J_i$ is completely determined by the set $J_i\cap [n]$. In particular, if $J=(J_1|J_2)$ and $J_1\cap [n]=A$ then $J_2\cap [n] = A^c$, where $A^c=[n]\setminus A$. For this reason, we will write such a $D$-set partition as $J=(A|A^c)$. As an example, with $n=4$ we have that the $D$-set partition $J=(13\bar1\bar3|24\bar2\bar4)$ can be written as $J=(A|A^c)$ where $A=\{1,3\}$. 

\vspace{.3cm}

 Let $J=(A|A^c)$ be an ordered $D$-set partition of $[\pm n]$ and let 
$$
S_J(q)=\left\{\lambda\in D_{2n}(q):{\small i}\larc{a} {\small j} \in\lambda\text{ implies }i,j\text{ are in the same part of }J\right\}
$$
and define the \emph{standardization map} by the bijection

\begin{equation}\label{standard}
\text{st}_J:S_J(q)\rightarrow S_{[\pm |A|]}(q)\times S_{[\pm |A^c|]}(q)
\end{equation}

\noindent that relabels the indices of partitions in $S_J(q)$ according to the unique order-preserving map
\begin{equation}\label{straight}
\text{st}_A:\pm A\rightarrow [\pm |A|]
\end{equation}
where $A$ is a part of $J$.

\noindent As an example, let $J=\{134|25\}$ and let $\lambda$ be given by
\begin{equation}\label{lambda}
\lambda =\begin{tikzpicture}[baseline=.2cm]
	\foreach \x in {1,2,3,4,5,6,7,8,9,10} 
		\node (\x) at (\x/2,0) [inner sep=0pt] {$\bullet$};
	\node at (1/2,-.2) {$\scriptstyle 1$};
	\node at (2/2,-.2) {$\scriptstyle 2$};
	\node at (2.9/2,-.2) {$\scriptstyle 3$};
	\node at (3.9/2,-.2) {$\scriptstyle 4$};
	\node at (4.8/2,-.2) {$\scriptstyle 5$};
	\node at (5.8/2,-.2) {$\scriptstyle -5$};
	\node at (6.8/2,-.2) {$\scriptstyle -4$};
	\node at (7.8/2,-.2) {$\scriptstyle -3$};
	\node at (8.8/2,-.2) {$\scriptstyle -2$};
	\node at (9.8/2,-.2) {$\scriptstyle -1$};
	\draw (1) .. controls (1.75/2,.35) and (2.25/2,.35) ..  node [above=-2.2pt] {$\scriptstyle a$} (3); 
	\draw (2) .. controls (3/2,.5) and (4/2,.5) ..  node [above=-2pt] {$\scriptstyle b$} (5);
	\draw (8) .. controls (8.75/2,.35) and (9.25/2,.35) ..  node [above=-2pt] {$\scriptstyle -a$} (10);
	\draw (6) .. controls (7/2,.5) and (8/2,.5) ..  node [above=-2pt] {$\scriptstyle -b$} (9);
	\draw (3) .. controls (4.25/2,.5) and (5.75/2,.5) ..  node [above=-2pt] {$\scriptstyle c$} (7);
	\draw (4) .. controls (5.25/2,.5) and (6.75/2,.5) ..  node [above=-2pt] {$\scriptstyle -c$} (8); 
\end{tikzpicture}
\end{equation}
then

\begin{equation*}\text{st}_J(\lambda)=\begin{tikzpicture}[baseline=.2cm]
	\foreach \x in {1,2,3,4,5,6} 
		\node (\x) at (\x/2,0) [inner sep=0pt] {$\bullet$};
	\node at (1/2,-.2) {$\scriptstyle 1$};
	\node at (2/2,-.2) {$\scriptstyle 2$};
	\node at (3/2,-.2) {$\scriptstyle 3$};
	\node at (3.8/2,-.2) {$\scriptstyle -3$};
	\node at (4.8/2,-.2) {$\scriptstyle -2$};
	\node at (5.8/2,-.2) {$\scriptstyle -1$};
	\draw (1) .. controls (1.25/2,.35) and (1.75/2,.35) ..  node [above=-2.2pt] {$\scriptstyle a$} (2); 
	\draw (5) .. controls (5.25/2,.35) and (5.75/2,.35) ..  node [above=-2pt] {$\scriptstyle -a$} (6);
	\draw (2) .. controls (2.25/2,.4) and (3.75/2,.4) ..  node [above=-2pt] {$\scriptstyle c$} (4);
	\draw (3) .. controls (3.25/2,.4) and (4.75/2,.4) ..  node [above=-2pt] {$\scriptstyle -c$} (5); 
\end{tikzpicture}\times 
\begin{tikzpicture}[baseline=.2cm]
	\foreach \x in {1,2,3,4} 
		\node (\x) at (\x/2,0) [inner sep=0pt] {$\bullet$};
	\node at (1/2,-.2) {$\scriptstyle 1$};
	\node at (2/2,-.2) {$\scriptstyle 2$};
	\node at (2.8/2,-.2) {$\scriptstyle -2$};
	\node at (3.8/2,-.2) {$\scriptstyle -1$};
	\draw (1) .. controls (1.25/2,.35) and (1.75/2,.35) ..  node [above=-2.2pt] {$\scriptstyle b$} (2); 
	\draw (3) .. controls (3.25/2,.35) and (3.75/2,.35) ..  node [above=-2pt] {$\scriptstyle -b$} (4); 
\end{tikzpicture}\in S_{[\pm 3]}(q)\times S_{[\pm 2]}(q) \end{equation*}

\begin{definition}
Let $J=(A|A^c)$ be an ordered $D$-set partition of $[\pm n]$. Define $U_J^D\subseteq U^D$, where $U^D=U^D_{2n}(q)$, as 
\begin{equation*}
U_J^D=\{x\in U^D:x_{ij}\neq 0 \text{ implies } i,j \text{ are in the same part of }J\}
\end{equation*}
\end{definition}

\noindent The map in (\ref{standard}) can be extended to produce an isomorphism $\text{st}_J:U^D_J\rightarrow U_{2|A|}^D(q)\times U_{2|A^c|}^D(q)$ by reordering the rows and columns as in (\ref{straight}).

\null

The restriction map on \textbf{SC}$^D_{2n}(q)$ is given by
\begin{equation}\label{resmap}
{\large\text{Res}}_{\text{st}_J(U_{J}^D)}^{U^D} : \mathbf{SC}_{2n}^D(q)\rightarrow\mathbf{SC}_{2|A|}^D(q)\otimes\mathbf{SC}_{2|A^c|}^D(q)
\end{equation}
where ${\large\text{Res}}_{\text{st}_J(U_{J}^D)}^{U^D}(\chi)(u)=\chi(\text{st}_J^{-1}(u))$ for $u\in U_{2|A|}^D(q)\times U_{2|A^c|}^D(q)$.

\vspace{.3cm}
\noindent We must show that this map is well-defined, i.e., takes superclass functions to superclass functions. Since supercharacters of type $D$ are restrictions of supercharacters of type $A$ (see \cite[Proposition 2.2]{andre_06}), equation (\ref{resmap}) can be written as 
$$
{\large\text{Res}}_{\text{st}_J(U_{J}^D)}^{U^D}(\zeta_{U^D})(u)=\zeta_{U^D}(\text{st}_J^{-1}(u))
$$
for some $\zeta$ supercharacter of type $A$ such that its restriction $\zeta_{U^D}$ to $U^D$ is precisely $\chi$. Also, in \cite[Theorem 6.4]{diaconis}, Diaconis and Isaacs prove that superclass functions of type $A$ restrict to superclass functions. Putting these facts together we conclude that the restriction map sends superclass functions of type $D$ to superclass functions.

\vspace{.3cm}
\noindent Now, define the coproduct on supercharacters as 

$$
\Delta(\chi) :=\displaystyle\sum_{{J}}{\large\text{Res}}_{\text{st}_J(U_{J}^D)}^{U^D}(\chi)
$$
summing over all ordered $D$-set partitions $J=(A|A^c)$ of $[\pm n]$. 

\vspace{.3cm}

Given a subset $A$ of $[n]$ and $\lambda\in D_{2n}(q)$ let $\lambda|_A$ denote the restriction of $\lambda$ to the set $[\pm A]$. This is, the ground set of $\lambda|_A$ is $[\pm A]$ and $\small{i}\larc{a}\small{j}\in\lambda|_A$ if $\small{i}\larc{a}\small{j}\in\lambda$. Now, denoting by $\mathcal A(\lambda)$ the set of arcs of $\lambda$ we see that $\mathcal A(\lambda|_A)\sqcup\mathcal A(\lambda|_{A^c})\subseteq \mathcal A(\lambda)$. When the equality $\mathcal A(\lambda|_A)\cup\mathcal A(\lambda|_{A^c})= \mathcal A(\lambda)$ holds we write $\lambda = \lambda|_A\sqcup \lambda|_{A^c}$.

The following proposition tells us how to compute the coproduct in the superclass characteristic functions.

\begin{proposition}\label{coproduct}
Let $\lambda$ be a $D_{2n}(q)$-partition. Then
\begin{equation*}
\Delta (\kappa_{\lambda})=\displaystyle\sum_{\substack{\lambda =\lambda |_A\sqcup\lambda |_{A^c}}}\kappa_{\text{st}_A(\lambda |_A )}\otimes\kappa_{\text{st}_{A^c}(\lambda |_{A^c})}
\end{equation*}
summing over all $A\subseteq [n]$ such that $\lambda = \lambda |_A\sqcup\lambda |_{A^c}$.
\end{proposition}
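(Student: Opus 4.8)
The plan is to compute the coproduct one summand at a time. Since $\Delta$ is defined as $\sum_J \text{Res}_{\text{st}_J(U_J^D)}^{U^D}$ over the ordered $D$-set partitions $J=(A|A^c)$ of $[\pm n]$, and each restriction map is $\mathbb C$-linear and lands in $\mathbf{SC}^D_{2|A|}(q)\otimes\mathbf{SC}^D_{2|A^c|}(q)$, it suffices to identify $\text{Res}_J(\kappa_\lambda)$ for a fixed $J$ and then sum over $J$. Because the $\kappa$'s are precisely the indicator functions of superclasses, and a superclass function on a product of groups is a tensor of superclass functions, I would determine $\text{Res}_J(\kappa_\lambda)$ by evaluating it on superclass representatives $(x_{\nu_1},x_{\nu_2})$ of $U^D_{2|A|}(q)\times U^D_{2|A^c|}(q)$. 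By the definition of the restriction map this value equals $\kappa_\lambda\!\left(\text{st}_J^{-1}(x_{\nu_1},x_{\nu_2})\right)$, which is $1$ exactly when $\text{st}_J^{-1}(x_{\nu_1},x_{\nu_2})$ lies in the superclass $K_\lambda$ and $0$ otherwise.

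First I would establish the central combinatorial fact. The element $\text{st}_J^{-1}(x_{\nu_1},x_{\nu_2})\in U_J^D$ has all its nonzero off-diagonal entries confined to the index blocks $\pm A$ and $\pm A^c$, so by Lemma \ref{bijection} its associated nilpotent matrix is block-diagonal with respect to $J$. The rank data of the south-west submatrices that classify a type-$A$ two-sided $U_{2n}(q)$-orbit then split along the two blocks: for a block-diagonal matrix every such submatrix is itself block-diagonal, so its rank is additive over $\pm A$ and $\pm A^c$, and the reduction to canonical form never requires an elementary operation linking the two blocks. Consequently the $D_{2n}(q)$-partition of $\text{st}_J^{-1}(x_{\nu_1},x_{\nu_2})$ is the block-diagonal partition $\text{st}_A^{-1}(\nu_1)\sqcup\text{st}_{A^c}^{-1}(\nu_2)$ obtained by placing $\nu_1$ on $\pm A$ and $\nu_2$ on $\pm A^c$ after undoing the order-preserving relabelings $\text{st}_A$ and $\text{st}_{A^c}$; order-preservation guarantees that the arc and nesting data of each block are transported faithfully, so $\text{st}_A(\lambda|_A)$ is a genuine $D_{2|A|}(q)$-partition.

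Granting this, the evaluation collapses: $\kappa_\lambda\!\left(\text{st}_J^{-1}(x_{\nu_1},x_{\nu_2})\right)=1$ iff $\lambda=\text{st}_A^{-1}(\nu_1)\sqcup\text{st}_{A^c}^{-1}(\nu_2)$. Since the right-hand side is block-diagonal with respect to $J$, this forces $\lambda=\lambda|_A\sqcup\lambda|_{A^c}$ together with $\nu_1=\text{st}_A(\lambda|_A)$ and $\nu_2=\text{st}_{A^c}(\lambda|_{A^c})$. Hence $\text{Res}_J(\kappa_\lambda)=\kappa_{\text{st}_A(\lambda|_A)}\otimes\kappa_{\text{st}_{A^c}(\lambda|_{A^c})}$ when $\lambda=\lambda|_A\sqcup\lambda|_{A^c}$, and $\text{Res}_J(\kappa_\lambda)=0$ otherwise. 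Summing over all ordered $D$-set partitions $J=(A|A^c)$, equivalently over all $A\subseteq[n]$, only the block-diagonal choices of $A$ survive, which yields exactly the claimed formula.

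The main obstacle is the central fact of the second paragraph: that the standardization isomorphism $\text{st}_J:U_J^D\to U^D_{2|A|}(q)\times U^D_{2|A^c|}(q)$ carries superclasses to products of superclasses. The delicate point is that $\pm A$ and $\pm A^c$ are interleaved in the total order $1\prec\cdots\prec n\prec -n\prec\cdots\prec -1$ rather than contiguous, so I must verify that the two-sided $U_{2n}(q)$-orbit invariants genuinely decouple across non-adjacent index sets. I would argue this by checking that the relevant submatrix ranks are additive over the block decomposition regardless of the interleaving, and that the canonical block-diagonal representative realizes precisely those ranks, so the orbit of the combined matrix is determined by the two block orbits. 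Everything else is routine bookkeeping with the bijection of Lemma \ref{bijection} and the definition of $\lambda|_A$ and $\lambda|_{A^c}$.
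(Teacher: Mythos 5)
Your proposal is correct and takes essentially the same route as the paper: both evaluate $\text{Res}_{\text{st}_J(U_J^D)}^{U^D}(\kappa_\lambda)$ on superclass representatives $x_{\mu\times\nu}$ of $U_{2|A|}^D(q)\times U_{2|A^c|}^D(q)$ and observe that the result is the indicator of $\text{st}_J(\lambda)=\mu\times\nu$, which forces $\lambda=\lambda|_A\sqcup\lambda|_{A^c}$, before summing over $J$. You in fact supply more justification than the paper does for the key point that superclass membership decouples across the interleaved blocks of $J$ (via additivity of the submatrix ranks classifying the type-$A$ orbits); the paper simply asserts this evaluation.
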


\begin{proof}
For an ordered $D$-set composition $J=(A|A^c)$ we have that $st_A(\lambda|_A)\in D_{2|A|}(q)$ and $st_{A^c}(\lambda|_{A^c})\in D_{2|A^c|}(q)$. Given $\mu\in D_{2|A|}(q)$ and $\nu\in D_{2|A^c|}(q)$ denote by $x_{\mu\times\nu}$ the natural orbit superclass representative indexed by $\mu\times\lambda$ in the group $U_{2|A|}^D(q)\times U_{2|A^c|}^D(q)$. Then we have
\begin{equation*}
{\large\text{Res}}_{\text{st}_J(U_{J}^D)}^{U^D}(\kappa_{\lambda})(x_{\mu\times\nu}) =
\left\{
  \begin{array}{l l}
1 & \text{if }\lambda|_A = \text{st}^{-1}_J(\mu) \text{ and }\lambda |_{A^c} = \text{st}^{-1}_J(\nu)\\
0 & \text{ otherwise}
\end{array}\right.\\
\end{equation*}
This means that ${\large\text{Res}}_{\text{st}_J(U_{J}^D)}^{U^D}(\kappa_{\lambda})(x_{\mu\times\nu})\neq 0$ when $\text{st}_J(\lambda)=\mu\times\nu$. This concludes the proof.
\end{proof}

\begin{example}
Let $\lambda\in D_{12}(q)$ given by
\begin{equation*}
\lambda: \begin{tikzpicture}[baseline=.2cm]
	\foreach \x in {1,2,3,4,5,6,7,8,9,10,11,12} 
		\node (\x) at (\x/2,0) [inner sep=0pt] {$\bullet$};
	\node at (1/2,-.2) {$\scriptstyle 1$};
	\node at (2/2,-.2) {$\scriptstyle 2$};
	\node at (3/2,-.2) {$\scriptstyle 3$};
	\node at (4/2,-.2) {$\scriptstyle 4$};
	\node at (5/2,-.2) {$\scriptstyle 5$};
	\node at (6/2,-.2) {$\scriptstyle 6$};
	\node at (6.8/2,-.2) {$\scriptstyle -6$};
	\node at (7.8/2,-.2) {$\scriptstyle -5$};
	\node at (8.8/2,-.2) {$\scriptstyle -4$};
	\node at (9.8/2,-.2) {$\scriptstyle -3$};
	\node at (10.8/2,-.2) {$\scriptstyle -2$};
	\node at (11.8/2,-.2) {$\scriptstyle -1$};
	\draw (1) .. controls (1.75/2,.5) and (3.25/2,.5) ..  node [above=-2.2pt] {$\scriptstyle a$} (4); 
	\draw (4) .. controls (4.75/2,.5) and (6.25/2,.5) ..  node [above=-2pt] {$\scriptstyle b$} (7); 
	\draw (3) .. controls (3.25/2,.4) and (4.75/2,.4) ..  node [above=-2.2pt] {$\scriptstyle c$} (5); 
	\draw (8) .. controls (8.75/2,.4) and (9.25/2,.4) ..  node [above=-2.2pt] {$\scriptstyle -c$} (10); 
	\draw (6) .. controls (6.75/2,.5) and (8.25/2,.5) ..  node [above=-2pt] {$\scriptstyle -b$} (9); 
	\draw (9) .. controls (9.75/2,.5) and (11.25/2,.5) ..  node [above=-2.2pt] {$\scriptstyle -a$} (12); 
\end{tikzpicture}
\end{equation*}

\noindent Then,

\begin{align*}
\Delta(\kappa_{\lambda}) &=\kappa_{\lambda}\otimes\kappa_{\emptyset}+\kappa_{\begin{tikzpicture}[baseline=.2cm]
	\foreach \x in {1,2,3,4,5,6} 
		\node (\x) at (\x/4,0) [inner sep=0pt] {$\scriptstyle\bullet$};
	\node at (1/4,-.2) {$\scriptscriptstyle 1$};
	\node at (2/4,-.2) {$\scriptscriptstyle 2$};
	\node at (2.8/4,-.2) {$\scriptscriptstyle 3$};
	\node at (3.8/4,-.2) {$\scriptscriptstyle -3$};
	\node at (4.8/4,-.2) {$\scriptscriptstyle -2$};
	\node at (5.8/4,-.2) {$\scriptscriptstyle -1$};
	\draw (1) .. controls (1.25/4,.35) and (1.75/4,.35) ..  node [above=-2.2pt] {$\scriptscriptstyle a$} (2); 
	\draw (2) .. controls (2.25/4,.35) and (3.75/4,.35) ..  node [above=-2pt] {$\scriptscriptstyle b$} (4); 
	\draw (3) .. controls (3.25/4,.35) and (4.75/4,.35) ..  node [above=-2.2pt] {$\scriptscriptstyle -b$} (5); 
	\draw (5) .. controls (5.25/4,.35) and (5.75/4,.35) ..  node [above=-2pt] {$\scriptscriptstyle -a$} (6); 
\end{tikzpicture}}
\otimes
\kappa_{\begin{tikzpicture}[baseline=.2cm]
	\foreach \x in {1,2,3,4,5,6} 
		\node (\x) at (\x/4,0) [inner sep=0pt] {$\scriptstyle\bullet$};
	\node at (1/4,-.2) {$\scriptscriptstyle 1$};
	\node at (2/4,-.2) {$\scriptscriptstyle 2$};
	\node at (3/4,-.2) {$\scriptscriptstyle 3$};
	\node at (3.8/4,-.2) {$\scriptscriptstyle -3$};
	\node at (4.8/4,-.2) {$\scriptscriptstyle -2$};
	\node at (5.8/4,-.2) {$\scriptscriptstyle -1$};
	\draw (2) .. controls (2.25/4,.35) and (2.75/4,.35) ..  node [above=-2.2pt] {$\scriptscriptstyle c$} (3); 
	\draw (4) .. controls (4.25/4,.35) and (4.75/4,.35) ..  node [above=-2pt] {$\scriptscriptstyle -c$} (5); 
\end{tikzpicture}}+\kappa_{\begin{tikzpicture}[baseline=.2cm]
	\foreach \x in {1,2,3,4,5,6} 
		\node (\x) at (\x/4,0) [inner sep=0pt] {$\scriptstyle\bullet$};
	\node at (1/4,-.2) {$\scriptscriptstyle 1$};
	\node at (2/4,-.2) {$\scriptscriptstyle 2$};
	\node at (3/4,-.2) {$\scriptscriptstyle 3$};
	\node at (3.8/4,-.2) {$\scriptscriptstyle -3$};
	\node at (4.8/4,-.2) {$\scriptscriptstyle -2$};
	\node at (5.8/4,-.2) {$\scriptscriptstyle -1$};
	\draw (2) .. controls (2.25/4,.35) and (2.75/4,.35) ..  node [above=-2.2pt] {$\scriptscriptstyle c$} (3); 
	\draw (4) .. controls (4.25/4,.35) and (4.75/4,.35) ..  node [above=-2pt] {$\scriptscriptstyle -c$} (5); 
\end{tikzpicture}}\otimes
\kappa_{\begin{tikzpicture}[baseline=.2cm]
	\foreach \x in {1,2,3,4,5,6} 
		\node (\x) at (\x/4,0) [inner sep=0pt] {$\scriptstyle\bullet$};
	\node at (1/4,-.2) {$\scriptscriptstyle 1$};
	\node at (2/4,-.2) {$\scriptscriptstyle 2$};
	\node at (2.8/4,-.2) {$\scriptscriptstyle 3$};
	\node at (3.8/4,-.2) {$\scriptscriptstyle -3$};
	\node at (4.8/4,-.2) {$\scriptscriptstyle -2$};
	\node at (5.8/4,-.2) {$\scriptscriptstyle -1$};
	\draw (1) .. controls (1.25/4,.35) and (1.75/4,.35) ..  node [above=-2.2pt] {$\scriptscriptstyle a$} (2); 
	\draw (2) .. controls (2.25/4,.35) and (3.75/4,.35) ..  node [above=-2pt] {$\scriptscriptstyle b$} (4); 
	\draw (3) .. controls (3.25/4,.35) and (4.75/4,.35) ..  node [above=-2.2pt] {$\scriptscriptstyle -b$} (5); 
	\draw (5) .. controls (5.25/4,.35) and (5.75/4,.35) ..  node [above=-2pt] {$\scriptscriptstyle -a$} (6); 
\end{tikzpicture}}\\ &+
\kappa_{\begin{tikzpicture}[baseline=.2cm]
	\foreach \x in {1,2,3,4,5,6,7,8} 
		\node (\x) at (\x/4,0) [inner sep=0pt] {$\scriptstyle\bullet$};
	\node at (1/4,-.2) {$\scriptscriptstyle 1$};
	\node at (2/4,-.2) {$\scriptscriptstyle 2$};
	\node at (3/4,-.2) {$\scriptscriptstyle 3$};
	\node at (4/4,-.2) {$\scriptscriptstyle 4$};
	\node at (4.8/4,-.2) {$\scriptscriptstyle -4$};
	\node at (5.8/4,-.2) {$\scriptscriptstyle -3$};
	\node at (6.8/4,-.2) {$\scriptscriptstyle -2$};
	\node at (7.8/4,-.2) {$\scriptscriptstyle -1$};
	\draw (1) .. controls (1.75/4,.35) and (2.25/4,.35) ..  node [above=-2.2pt] {$\scriptscriptstyle a$} (3); 
	\draw (3) .. controls (3.75/4,.35) and (4.25/4,.35) ..  node [above=-2pt] {$\scriptscriptstyle b$} (5); 
	\draw (4) .. controls (4.75/4,.35) and (5.25/4,.35) ..  node [above=-2.2pt] {$\scriptscriptstyle -b$} (6); 
	\draw (6) .. controls (6.75/4,.35) and (7.25/4,.35) ..  node [above=-2pt] {$\scriptscriptstyle -a$} (8); 
\end{tikzpicture}
}\otimes
\kappa_{\begin{tikzpicture}[baseline=.2cm]
	\foreach \x in {1,2,3,4} 
		\node (\x) at (\x/4,0) [inner sep=0pt] {$\scriptstyle\bullet$};
	\node at (1/4,-.2) {$\scriptscriptstyle 1$};
	\node at (2/4,-.2) {$\scriptscriptstyle 2$};
	\node at (2.8/4,-.2) {$\scriptscriptstyle -2$};
	\node at (3.8/4,-.2) {$\scriptscriptstyle -1$};
	\draw (1) .. controls (1.25/4,.35) and (1.75/4,.35) ..  node [above=-2.2pt] {$\scriptscriptstyle c$} (2); 
	\draw (3) .. controls (3.25/4,.35) and (3.75/4,.35) ..  node [above=-2pt] {$\scriptscriptstyle -c$} (4); 
\end{tikzpicture}}+
\kappa_{\begin{tikzpicture}[baseline=.2cm]
	\foreach \x in {1,2,3,4} 
		\node (\x) at (\x/4,0) [inner sep=0pt] {$\scriptstyle\bullet$};
	\node at (1/4,-.2) {$\scriptscriptstyle 1$};
	\node at (2/4,-.2) {$\scriptscriptstyle 2$};
	\node at (2.8/4,-.2) {$\scriptscriptstyle -2$};
	\node at (3.8/4,-.2) {$\scriptscriptstyle -1$};
	\draw (1) .. controls (1.25/4,.35) and (1.75/4,.35) ..  node [above=-2.2pt] {$\scriptscriptstyle c$} (2); 
	\draw (3) .. controls (3.25/4,.35) and (3.75/4,.35) ..  node [above=-2pt] {$\scriptscriptstyle -c$} (4); 
\end{tikzpicture}}\otimes
\kappa_{\begin{tikzpicture}[baseline=.2cm]
	\foreach \x in {1,2,3,4,5,6,7,8} 
		\node (\x) at (\x/4,0) [inner sep=0pt] {$\scriptstyle\bullet$};
	\node at (1/4,-.2) {$\scriptscriptstyle 1$};
	\node at (2/4,-.2) {$\scriptscriptstyle 2$};
	\node at (3/4,-.2) {$\scriptscriptstyle 3$};
	\node at (4/4,-.2) {$\scriptscriptstyle 4$};
	\node at (4.8/4,-.2) {$\scriptscriptstyle -4$};
	\node at (5.8/4,-.2) {$\scriptscriptstyle -3$};
	\node at (6.8/4,-.2) {$\scriptscriptstyle -2$};
	\node at (7.8/4,-.2) {$\scriptscriptstyle -1$};
	\draw (1) .. controls (1.75/4,.35) and (2.25/4,.35) ..  node [above=-2.2pt] {$\scriptscriptstyle a$} (3); 
	\draw (3) .. controls (3.75/4,.35) and (4.25/4,.35) ..  node [above=-2pt] {$\scriptscriptstyle b$} (5); 
	\draw (4) .. controls (4.75/4,.35) and (5.25/4,.35) ..  node [above=-2.2pt] {$\scriptscriptstyle -b$} (6); 
	\draw (6) .. controls (6.75/4,.35) and (7.25/4,.35) ..  node [above=-2pt] {$\scriptscriptstyle -a$} (8); 
\end{tikzpicture}}\\
&+ \kappa_{\begin{tikzpicture}[baseline=.2cm]
	\foreach \x in {1,2} 
		\node (\x) at (\x/4,0) [inner sep=0pt] {$\scriptstyle\bullet$};
	\node at (1/4,-.2) {$\scriptscriptstyle 1$};
	\node at (2/4,-.2) {$\scriptscriptstyle -1$};
\end{tikzpicture}}\otimes
\kappa_{\begin{tikzpicture}[baseline=.2cm]
	\foreach \x in {1,2,3,4,5,6,7,8,9,10} 
		\node (\x) at (\x/4,0) [inner sep=0pt] {$\scriptstyle\bullet$};
	\node at (1/4,-.2) {$\scriptscriptstyle 1$};
	\node at (2/4,-.2) {$\scriptscriptstyle 2$};
	\node at (3/4,-.2) {$\scriptscriptstyle 3$};
	\node at (4/4,-.2) {$\scriptscriptstyle 4$};
	\node at (5/4,-.2) {$\scriptscriptstyle 5$};
	\node at (5.8/4,-.2) {$\scriptscriptstyle -5$};
	\node at (6.8/4,-.2) {$\scriptscriptstyle -4$};
	\node at (7.8/4,-.2) {$\scriptscriptstyle -3$};
	\node at (8.8/4,-.2) {$\scriptscriptstyle -2$};
	\node at (9.8/4,-.2) {$\scriptscriptstyle -1$};
	\draw (1) .. controls (1.75/4,.35) and (2.25/4,.35) ..  node [above=-2.2pt] {$\scriptscriptstyle a$} (3); 
	\draw (3) .. controls (4.25/4,.35) and (4.75/4,.35) ..  node [above=-2pt] {$\scriptscriptstyle b$} (6);
	\draw (2) .. controls (2.75/4,.35) and (3.25/4,.35) ..  node [above=-2.2pt] {$\scriptscriptstyle c$} (4); 
	\draw (7) .. controls (7.75/4,.35) and (8.25/4,.35) ..  node [above=-2pt] {$\scriptscriptstyle -c$} (9); 
	\draw (5) .. controls (6.25/4,.35) and (6.75/4,.35) ..  node [above=-2.2pt] {$\scriptscriptstyle -b$} (8); 
	\draw (8) .. controls (8.75/4,.35) and (9.25/4,.35) ..  node [above=-2pt] {$\scriptscriptstyle -a$} (10); 
\end{tikzpicture}}
+\kappa_{\begin{tikzpicture}[baseline=.2cm]
	\foreach \x in {1,2,3,4,5,6,7,8,9,10} 
		\node (\x) at (\x/4,0) [inner sep=0pt] {$\scriptstyle\bullet$};
	\node at (1/4,-.2) {$\scriptscriptstyle 1$};
	\node at (2/4,-.2) {$\scriptscriptstyle 2$};
	\node at (3/4,-.2) {$\scriptscriptstyle 3$};
	\node at (4/4,-.2) {$\scriptscriptstyle 4$};
	\node at (5/4,-.2) {$\scriptscriptstyle 5$};
	\node at (5.8/4,-.2) {$\scriptscriptstyle -5$};
	\node at (6.8/4,-.2) {$\scriptscriptstyle -4$};
	\node at (7.8/4,-.2) {$\scriptscriptstyle -3$};
	\node at (8.8/4,-.2) {$\scriptscriptstyle -2$};
	\node at (9.8/4,-.2) {$\scriptscriptstyle -1$};
	\draw (1) .. controls (1.75/4,.35) and (2.25/4,.35) ..  node [above=-2.2pt] {$\scriptscriptstyle a$} (3); 
	\draw (3) .. controls (4.25/4,.35) and (4.75/4,.35) ..  node [above=-2pt] {$\scriptscriptstyle b$} (6);
	\draw (2) .. controls (2.75/4,.35) and (3.25/4,.35) ..  node [above=-2.2pt] {$\scriptscriptstyle c$} (4); 
	\draw (7) .. controls (7.75/4,.35) and (8.25/4,.35) ..  node [above=-2pt] {$\scriptscriptstyle -c$} (9); 
	\draw (5) .. controls (6.25/4,.35) and (6.75/4,.35) ..  node [above=-2.2pt] {$\scriptscriptstyle -b$} (8); 
	\draw (8) .. controls (8.75/4,.35) and (9.25/4,.35) ..  node [above=-2pt] {$\scriptscriptstyle -a$} (10); 
\end{tikzpicture}}
\otimes \kappa_{\begin{tikzpicture}[baseline=.2cm]
	\foreach \x in {1,2} 
		\node (\x) at (\x/4,0) [inner sep=0pt] {$\scriptstyle\bullet$};
	\node at (1/4,-.2) {$\scriptscriptstyle 1$};
	\node at (2/4,-.2) {$\scriptscriptstyle -1$};
\end{tikzpicture}}
+\kappa_{\emptyset}\otimes\kappa_{\lambda}
\end{align*}
\end{example}

\noindent It is not hard to see that the coproduct is coassociative. Also, notice that some of the beauty of this coalgebra structure is that it is directly connected to representation theory, as is the case in type $A$. 
We now define a multiplication in the space \textbf{SC}$^D$ as follows.

\begin{definition}\label{product}
For $\lambda,\mu$ labelled $D$-partitions of $[\pm k],[\pm (n-k)]$, respectively, define
$$
\kappa_{\lambda}\cdot\kappa_{\mu} :=\displaystyle\sum_{\substack{\nu \in D_{2n}(q)
}
}\kappa_{\nu}
$$
summing over all $D_{2n}(q)$-partitions $\nu$ such that $\nu|_{[\pm k]}=\lambda$ and $\nu|_{[\pm(k+1,\dots,n)]}=\mu\uparrow^k$

\noindent where, for $1\leq i,j\leq n-k$
\begin{equation*}
\mu\uparrow^k =\left\{{\small (k+i)}\larc{a} {\small (k+j)}  \;:\;{\small i}\larc{a} {\small j}\in \mu\right\}\cup\left\{  {\small (k+i)}\larc{a} {\small (-k-j)} \;:\;{\small i}\larc{a} {\small -j}\in \mu\right\}
\end{equation*}
\end{definition}

\begin{example}
Denote $-i$ by $\overline i$, then
\begin{align*}
\kappa \begin{tikzpicture}[baseline=.2cm]
	\foreach \x in {1,2,3,4} 
		\node (\x) at (\x/4,0) [inner sep=0pt] {$\scriptstyle \bullet$};
	\node at (1/4,-.2) {$\scriptscriptstyle 1$};
	\node at (2/4,-.2) {$\scriptscriptstyle 2$};
	\node at (2.9/4,-.2) {$\scriptscriptstyle {\overline 2}$};
	\node at (3.9/4,-.2) {$\scriptscriptstyle {\overline 1}$};
	\draw (1) .. controls (1.25/4,.35) and (1.75/4,.35) ..  node [above=-2.2pt] {$\scriptscriptstyle a$} (2); 
	\draw (3) .. controls (3.25/4,.35) and (3.75/4,.35) ..  node [above=-2pt] {$\scriptscriptstyle\overline a$} (4); 
\end{tikzpicture}
\cdot\kappa 
	\begin{tikzpicture}[baseline=.2cm]
	\foreach \x in {1,2,3,4} 
		\node (\x) at (\x/4,0) [inner sep=0pt] {$\scriptstyle\bullet$};
	\node at (1/4,-.2) {$\scriptscriptstyle 1$};
	\node at (2/4,-.2) {$\scriptscriptstyle 2$};
	\node at (2.9/4,-.2) {$\scriptscriptstyle \overline 2$};
	\node at (3.9/4,-.2) {$\scriptscriptstyle \overline 1$};
	\draw (1) .. controls (1.75/4,.35) and (2.25/4,.35) ..  node [above=-2.2pt] {$\scriptscriptstyle b$} (3); 
	\draw (2) .. controls (2.75/4,.35) and (3.35/4,.35) ..  node [above=-2pt] {$\scriptscriptstyle\overline b$} (4); 
\end{tikzpicture} &= 
\kappa \begin{tikzpicture}[baseline=.2cm]
	\foreach \x in {1,2,3,4,5,6,7,8} 
		\node (\x) at (\x/4,0) [inner sep=0pt] {$\scriptstyle\bullet$};
	\node at (1/4,-.2) {$\scriptscriptstyle 1$};
	\node at (2/4,-.2) {$\scriptscriptstyle 2$};
	\node at (2.9/4,-.2) {$\scriptscriptstyle 3$};
	\node at (3.9/4,-.2) {$\scriptscriptstyle 4$};
	\node at (4.8/4,-.2) {$\scriptscriptstyle \overline 4$};
	\node at (5.8/4,-.2) {$\scriptscriptstyle\overline 3$};
	\node at (6.8/4,-.2) {$\scriptscriptstyle\overline 2$};
	\node at (7.8/4,-.2) {$\scriptscriptstyle\overline 1$};
	\draw (1) .. controls (1.25/4,.35) and (1.75/4,.35) ..  node [above=-2.2pt] {$\scriptscriptstyle a$} (2); 
	\draw (3) .. controls (3.5/4,.35) and (4.5/4,.35) ..  node [above=-2pt] {$\scriptscriptstyle b$} (5); 
	\draw (4) .. controls (4.5/4,.35) and (5.5/4,.35) ..  node [above=-2.2pt] {$\scriptscriptstyle\overline b$} (6); 
	\draw (7) .. controls (7.25/4,.35) and (7.75/4,.35) ..  node [above=-2pt] {$\scriptscriptstyle\overline a$} (8); 
\end{tikzpicture} + \displaystyle\sum_{c\in\mathbb F_q^*}
\kappa\begin{tikzpicture}[baseline=.2cm]
	\foreach \x in {1,2,3,4,5,6,7,8} 
		\node (\x) at (\x/4,0) [inner sep=0pt] {$\scriptstyle\bullet$};
	\node at (1/4,-.2) {$\scriptscriptstyle 1$};
	\node at (2/4,-.2) {$\scriptscriptstyle 2$};
	\node at (2.9/4,-.2) {$\scriptscriptstyle 3$};
	\node at (3.9/4,-.2) {$\scriptscriptstyle 4$};
	\node at (4.8/4,-.2) {$\scriptscriptstyle\overline 4$};
	\node at (5.8/4,-.2) {$\scriptscriptstyle\overline 3$};
	\node at (6.8/4,-.2) {$\scriptscriptstyle\overline 2$};
	\node at (7.8/4,-.2) {$\scriptscriptstyle \overline 1$};
	\draw (1) .. controls (1.25/4,.35) and (1.75/4,.35) ..  node [above=-2.2pt] {$\scriptscriptstyle a$} (2); 
	\draw [densely dotted](2) .. controls (2.25/4,.35) and (2.75/4,.35) ..  node [above=-2pt] {$\scriptscriptstyle c$} (3);
	\draw (3) .. controls (3.5/4,.35) and (4.5/4,.35) ..  node [above=-2pt] {$\scriptscriptstyle b$} (5); 
	\draw (4) .. controls (4.5/4,.35) and (5.5/4,.35) ..  node [above=-2.2pt] {$\scriptscriptstyle\overline b$} (6); 
	\draw [densely dotted](6) .. controls (6.25/4,.35) and (6.75/4,.35) ..  node [above=-2pt] {$\scriptscriptstyle\overline c$} (7);
	\draw (7) .. controls (7.25/4,.35) and (7.75/4,.35) ..  node [above=-2pt] {$\scriptscriptstyle\overline a$} (8); 
\end{tikzpicture}\\
&+ \displaystyle\sum_{c\in\mathbb F_q^*}
{\kappa}\begin{tikzpicture}[baseline=.2cm]
	\foreach \x in {1,2,3,4,5,6,7,8} 
		\node (\x) at (\x/4,0) [inner sep=0pt] {$\scriptstyle\bullet$};
	\node at (1/4,-.2) {$\scriptscriptstyle 1$};
	\node at (2/4,-.2) {$\scriptscriptstyle 2$};
	\node at (2.9/4,-.2) {$\scriptscriptstyle 3$};
	\node at (3.9/4,-.2) {$\scriptscriptstyle 4$};
	\node at (4.8/4,-.2) {$\scriptscriptstyle\overline 4$};
	\node at (5.8/4,-.2) {$\scriptscriptstyle \overline 3$};
	\node at (6.8/4,-.2) {$\scriptscriptstyle \overline 2$};
	\node at (7.8/4,-.2) {$\scriptscriptstyle \overline 1$};
	\draw (1) .. controls (1.25/4,.35) and (1.75/4,.35) ..  node [above=-2.2pt] {$\scriptscriptstyle a$} (2); 
	\draw [densely dotted](2) .. controls (2.5/4,.35) and (3.5/4,.35) ..  node [above=-2pt] {$\scriptscriptstyle c$} (4);
	\draw (3) .. controls (3.5/4,.35) and (4.5/4,.35) ..  node [above=-2pt] {$\scriptscriptstyle b$} (5); 
	\draw (4) .. controls (4.5/4,.35) and (5.5/4,.35) ..  node [above=-2.2pt] {$\scriptscriptstyle \overline b$} (6); 
	\draw [densely dotted](5) .. controls (5.5/4,.35) and (6.5/4,.35) ..  node [above=-2pt] {$\scriptscriptstyle \overline c$} (7);
	\draw (7) .. controls (7.25/4,.35) and (7.75/4,.35) ..  node [above=-2pt] {$\scriptscriptstyle \overline a$} (8); 
\end{tikzpicture}
\end{align*}
\end{example}

Since we want to induce an algebra structure on \textbf{SC}$^D$, we need to prove that Definition \ref{product} is indeed a product in the sense that it should be associative. This will be shown once we introduce the $P$-basis. In order to motivate somehow this definition of the product, besides being a ``natural" way of doing it, in type $A$ the product structure is raised from the \emph{inflation} map on superclass functions of that type. When trying to obtain the product from representation theory in type $D$, the analogous inflation map in this case failed in the sense that superclass functions are not mapped to superclass functions anymore. For this reason, instead of deducing Definition \ref{product} from a representation-theory point of view, the product was directly defined in this way. Nevertheless, in Proposition \ref{superchar_product}, we will see that the connection with representation theory remains strong. As a final remark, before proving the main result of this paper, this product differs from the one defined for $\textbf{SC}$ in \cite{aim}. The difference is that here we do not concatenate $\lambda$ and $\mu$. Instead, we put $\mu$ in between $\lambda^+$ and $\lambda^-$. This resembles the product defined in \cite[Section 3.5] {aaron} for the Hopf monoid \textbf{Pal} of palindromic set compositions. In Section \ref{future_work} we point out that this supercharacter theory of type $D$, in particular, carries a Hopf monoid structure. Yet this Hopf monoid is different from the Hopf monoid \textbf{Pal}, since their coproduct structures are different. We will give a brief description of the Hopf monoid $\mathbf{SC}^D$ in section \ref{future_work}.

Next, we define a different basis for $\mathbf{SC}$, in order to make computations easier. 
\begin{definition}
Let $\lambda, \mu$ be $D_{2n}(q)$-partitions. We say that $\lambda\leq\mu$ if $\mathcal A(\lambda)\subseteq\mathcal A(\mu)$ where $\mathcal A(\lambda)$ denotes the set of arcs in $\lambda$.\end{definition}

Given $\lambda$, we denote by $P_{\lambda}$ the superclass function defined as
\begin{equation*}
P_{\lambda} := \displaystyle\sum_{\mu\geq\lambda}\kappa_{\mu}.
\end{equation*}
From here, we see that $\{P_{\lambda}\}_{\lambda\in D_{2n}(q)}$ forms a basis for $\textbf{SC}^D$ as $n\geq 0$. This basis is called $P$-\emph{basis}.

\begin{proposition}\label{pbasis} The $P$-basis multiplies and comultiplies as follows:

\begin{itemize}
\item [$(a)$] For $\mu , \nu$ labelled $D$-partitions of $[\pm k],[\pm (n-k)]$, respectively,  we have 
\begin{equation}\label{prod_p}
P_{\mu}\cdot P_{\nu} = P_{\mu\sqcup \nu\uparrow^k}
\end{equation}

\item [$(b)$] For $\lambda\in D_{2n}(q)$ we have 
\begin{equation}\label{coprod_p}
\Delta (P_{\lambda})=\displaystyle\sum P_{\text{st}_A(\mu )}\otimes P_{\text{st}_{A^c}(\nu)}
\end{equation}
summing over all subsets $A\subseteq [n]$ such that $\lambda = \mu\sqcup\nu$ where  $\lambda |_A=\mu$ and  $\lambda |_{A^c}=\nu$.
\end{itemize}

\end{proposition}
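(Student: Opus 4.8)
The plan is to prove the two formulas separately, deriving the product formula $(a)$ from the definitions of $P_\lambda$ and the $\kappa$-product, and the coproduct formula $(b)$ from Proposition \ref{coproduct} together with a triangularity argument.

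For part $(a)$, I would start from the definition $P_\mu = \sum_{\sigma \geq \mu}\kappa_\sigma$ and $P_\nu = \sum_{\tau \geq \nu}\kappa_\tau$, so that by bilinearity $P_\mu \cdot P_\nu = \sum_{\sigma \geq \mu,\;\tau \geq \nu}\kappa_\sigma \cdot \kappa_\tau$. Using Definition \ref{product}, each term $\kappa_\sigma \cdot \kappa_\tau$ expands as a sum of $\kappa_\nu$ over all $D_{2n}(q)$-partitions whose restriction to $[\pm k]$ is $\sigma$ and whose restriction to $[\pm(k+1,\dots,n)]$ is $\tau\uparrow^k$. The key step is to reorganize this triple sum and compare it with $P_{\mu \sqcup \nu\uparrow^k} = \sum_{\rho \geq \mu\sqcup\nu\uparrow^k}\kappa_\rho$. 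The partial order $\leq$ is containment of arc sets, and the operation $\mu \sqcup \nu\uparrow^k$ places the arcs of $\mu$ on the block $[\pm k]$ and the shifted arcs of $\nu$ on the block indexed by $\{k+1,\dots,n\}$ (together with their negatives). I expect that a partition $\rho$ satisfies $\rho \geq \mu\sqcup\nu\uparrow^k$ precisely when $\rho$ arises exactly once in the triple sum: its restriction to $[\pm k]$ is some $\sigma \geq \mu$ and its restriction to the complementary block is some $\tau\uparrow^k \geq \nu\uparrow^k$, while the ``crossing'' arcs of $\rho$ between the two blocks are unconstrained. This identification should give a bijection between the index set of $P_{\mu\sqcup\nu\uparrow^k}$ and the terms of the triple sum, establishing $(a)$.

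For part $(b)$, I would apply $\Delta$ to $P_\lambda = \sum_{\mu \geq \lambda}\kappa_\mu$ and use Proposition \ref{coproduct} to write $\Delta(P_\lambda) = \sum_{\mu \geq \lambda}\sum_{\mu = \mu|_A \sqcup \mu|_{A^c}}\kappa_{\mathrm{st}_A(\mu|_A)}\otimes\kappa_{\mathrm{st}_{A^c}(\mu|_{A^c})}$. The goal is to collect these terms and match them with $\sum_A P_{\mathrm{st}_A(\lambda|_A)}\otimes P_{\mathrm{st}_{A^c}(\lambda|_{A^c})}$, summed over $A$ with $\lambda = \lambda|_A \sqcup \lambda|_{A^c}$. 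The crux is to show that only those $A$ for which $\lambda$ itself splits (no arc of $\lambda$ crosses between $A$ and $A^c$) contribute, and that for each such $A$ the inner sum over all $\mu \geq \lambda$ that split along $A$ factors as a product of two independent upward sums, one over partitions $\geq \mathrm{st}_A(\lambda|_A)$ and one over partitions $\geq \mathrm{st}_{A^c}(\lambda|_{A^c})$. Here one must check that if some $\mu \geq \lambda$ fails to split along a given $A$, its contribution either vanishes or is absorbed; I would argue that since standardization and restriction commute appropriately with the partial order, the admissible $A$ are exactly those compatible with $\lambda$, and the factorization of the $\kappa$-sums into $P$'s follows from the definition of the $P$-basis applied to each block.

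The main obstacle I anticipate is in part $(a)$: verifying that the reindexing is a genuine bijection, i.e. that every $\rho \geq \mu\sqcup\nu\uparrow^k$ decomposes uniquely as a pair $(\sigma,\tau)$ of restrictions with $\sigma\geq\mu$, $\tau\geq\nu$ (after shifting), with no overcounting and no missing terms coming from the crossing arcs allowed in a general $\rho$. I would handle this by carefully analyzing how an arbitrary arc of $\rho$ is classified — living in $[\pm k]$, living in the shifted complementary block, or crossing between them — and confirming that the definition of $\mu\sqcup\nu\uparrow^k$ together with the restriction operation $\rho \mapsto (\rho|_{[\pm k]}, \rho|_{[\pm(k+1,\dots,n)]})$ matches the generating set of terms from Definition \ref{product} exactly once. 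The analogous subtlety in part $(b)$ — ensuring the splitting condition $\lambda = \lambda|_A \sqcup \lambda|_{A^c}$ is the precise compatibility needed — should follow more directly once the order-theoretic behavior of restriction is pinned down.
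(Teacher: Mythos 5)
Your proposal is correct and follows essentially the same route as the paper: expand each $P$ in the $\kappa$-basis, apply the $\kappa$-basis product and the coproduct formula of Proposition \ref{coproduct} termwise, and reorganize the resulting sums by matching partitions $\rho\geq\mu\sqcup\nu\uparrow^k$ (resp.\ splittings of $\delta\geq\lambda$) with the terms of the expansion. The only difference is that you are more explicit than the paper about verifying that each such $\rho$ is counted exactly once, a point the paper's proof of part $(a)$ leaves implicit.
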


\begin{proof}
$(a).$ The left hand side of (\ref{prod_p}) gives us
\begin{equation*}
P_{\mu}\cdot P_{\nu} = \left(\displaystyle\sum_{\sigma\geq\mu}\kappa_{\sigma}\right)\cdot\left(\displaystyle\sum_{\delta\geq\nu}\kappa_{\delta}\right) = \displaystyle\sum_{\sigma\geq\mu}\displaystyle\sum_{\delta\geq\nu}\kappa_{\sigma}\cdot\kappa_{\delta}\\
\end{equation*}
Notice that the minimal element in this last equality corresponds to $\kappa_{\mu\sqcup \;\nu\uparrow^ k}$, where $\sqcup$ stands for disjoint union, and every other term in each $\kappa_{\sigma}\cdot\kappa_{\delta}$ is associated to a $D_{2n}(q)$-partition $\tau$ such that $\tau >\mu\sqcup \nu\uparrow^k$. 
On the other hand
\begin{align*}
P_{\mu\sqcup \nu\uparrow^k} &= \displaystyle\sum_{\tau\geq\mu\;\sqcup\; \nu\uparrow^k}\kappa_{\tau}.
\end{align*}
This concludes part $(a)$.
\noindent To prove part $(b)$ notice that from the left hand side of (\ref{coprod_p}) we have
\begin{align*}
\Delta (P_{\lambda}) &= \Delta\left(\displaystyle\sum_{\delta\geq\lambda}\kappa_{\delta}\right)
= \displaystyle\sum_{\delta\geq\lambda}\displaystyle\sum_{\substack{{A\subseteq [n]}\\ \delta=\tau\sqcup\sigma}}\kappa_{\text{st}_A(\tau)}\otimes\kappa_{\text{st}_{A^c}(\sigma)}
\end{align*}
\noindent On the other hand, the right hand side of (\ref{coprod_p}) gives us
\begin{align*}
\displaystyle\sum_{\substack{\lambda =\mu\sqcup\nu}}P_{\text{st}_A(\mu )}\otimes P_{\text{st}_{A^c}(\nu)} &=
\displaystyle\sum_{\substack{\lambda =\mu\sqcup\nu}}\displaystyle\sum_{\tau\geq\mu}\displaystyle\sum_{\sigma\geq \nu} \kappa_{\text{st}_A(\tau)}\otimes\kappa_{\text{st}_{A^c}(\sigma)}
\end{align*}

\noindent Now, every $\delta\geq\lambda$ such that $\delta = \tau\sqcup\sigma$ is such that $\lambda = (\tau\cap\lambda)\sqcup(\sigma\cap\lambda)$. This last decomposition of $\lambda$ can be written as $\lambda = \mu\sqcup\nu$ and we see that $\tau\geq\mu$, $\sigma\geq\nu$. Similarly, If $\lambda=\mu\sqcup\nu$ then $\delta=\tau\cup\sigma$ is such that $\delta\geq\lambda$, for $\tau\geq\mu,\sigma\geq\nu$.

\noindent The proposition follows.

\end{proof}

Notice that by the simplicity of the multiplication in the $P$-basis, we see that the definition \ref{product} gives an associative operation. Indeed, for $\lambda,\nu,\mu$ labelled $D$-partitions of $[\pm k],[\pm l],[\pm m]$, respectively, we have
\begin{align*}
(P_{\lambda}\cdot P_{\mu})\cdot P_{\nu} &= P_{\lambda\;\sqcup\;\mu\uparrow^k}\cdot P_{\nu}\\
&= P_{\lambda\;\sqcup\;\mu\uparrow^k\;\sqcup\;\nu\uparrow^{k+l}}\\
&= P_{\lambda\;\sqcup\;(\mu\;\sqcup\;\nu\uparrow^{l})\uparrow^k}\\
&= P_{\lambda}\cdot P_{\mu\;\sqcup\;\nu\uparrow^{l}} = P_{\lambda}\cdot (P_{\mu}\cdot P_{\nu}).
\end{align*}
Also, it follows that the space $\mathbf{SC}$ is free. The cofreeness is also guaranteed, following arguments analog to the ones exposed in \cite{bergeron_98}, but since this is not too relevant for our main results we skip the details.

We have that $\mathbf{SC}^D$ is graded, connected, has a unit $\kappa_{\emptyset}\in\mathbf{SC}^D_0$ and a counit $\epsilon:\mathbf{SC}\rightarrow\mathbb C$ obtained by taking the coefficient of $\kappa_{\emptyset}.$ In order to get a bialgebra structure, as stated in the preliminaries, most of the compatibilities coming from the requirement on the maps $m,u,\Delta,\epsilon$ are straightforward to check. The compatibility between the product and the coproduct is less obvious and is what will allow us to conclude the main result of this paper. Namely, we want to prove that  $\Delta (P_{\mu}\cdot P_{\nu}) = \Delta(P_{\mu})\cdot\Delta(P_{\nu})$. Now we are ready to prove the main theorem.
\begin{theorem}
The product and coproduct given in proposition \ref{pbasis}, provides the space $\textbf{SC}^D$ with a Hopf algebra structure.
\end{theorem}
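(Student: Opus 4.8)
The plan is to verify the single remaining axiom, the bialgebra compatibility, and then to invoke a standard structural result to obtain the antipode for free. By the discussion preceding the theorem we already know that $\mathbf{SC}^D$ is an associative algebra with unit $\kappa_{\emptyset}$ and a coassociative coalgebra with counit $\epsilon$, that it is graded with $A_i\cdot A_j\subseteq A_{i+j}$ and $\Delta(A_n)\subseteq\bigoplus_i A_i\otimes A_{n-i}$, and that it is connected. The unit/counit compatibilities, together with the fact that $\epsilon$ is an algebra map, are immediate from the formulas in Proposition \ref{pbasis}. The substantive point, as noted by the author, is to prove that $\Delta$ is an algebra morphism, i.e. that $\Delta(P_{\mu}\cdot P_{\nu})=\Delta(P_{\mu})\cdot\Delta(P_{\nu})$ for all $\mu\in D_{2k}(q)$ and $\nu\in D_{2(n-k)}(q)$.

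First I would expand both sides in the $P$-basis, where by Proposition \ref{pbasis} both operations are monomial. Setting $\lambda:=\mu\sqcup\nu\uparrow^k$, the left side is $\Delta(P_{\lambda})=\sum_A P_{\text{st}_A(\lambda|_A)}\otimes P_{\text{st}_{A^c}(\lambda|_{A^c})}$, summed over $A\subseteq[n]$ with $\lambda=\lambda|_A\sqcup\lambda|_{A^c}$, by (\ref{coprod_p}). For the right side I would use $(a\otimes b)(c\otimes d)=ac\otimes bd$ together with (\ref{coprod_p}) and then (\ref{prod_p}), obtaining a sum over pairs $(B,C)$ with $B\subseteq[k]$, $C\subseteq[n-k]$, $\mu=\mu|_B\sqcup\mu|_{B^c}$ and $\nu=\nu|_C\sqcup\nu|_{C^c}$, whose $(B,C)$-term is $P_{\text{st}_B(\mu|_B)\,\sqcup\,(\text{st}_C(\nu|_C))\uparrow^{|B|}}\otimes P_{\text{st}_{B^c}(\mu|_{B^c})\,\sqcup\,(\text{st}_{C^c}(\nu|_{C^c}))\uparrow^{|B^c|}}$.

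The key step is an explicit bijection between the two index sets, given by $A\leftrightarrow(B,C)$ with $B=A\cap[k]$ and $C=(A\cap\{k+1,\dots,n\})-k$. Since in $\lambda=\mu\sqcup\nu\uparrow^k$ the blocks $\mu$ and $\nu\uparrow^k$ occupy the disjoint index ranges $[\pm k]$ and $[\pm\{k+1,\dots,n\}]$ with no arc joining them, a subset $A$ yields a valid splitting of $\lambda$ exactly when $B$ splits $\mu$ and $C$ splits $\nu$, which makes the correspondence a bijection. It then remains to match the summands, which reduces to the identity $\text{st}_A(\lambda|_A)=\text{st}_B(\mu|_B)\sqcup(\text{st}_C(\nu|_C))\uparrow^{|B|}$ and its analogue for the complements. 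I expect this compatibility of the order-preserving standardization maps with disjoint union and the shift $\uparrow$ to be the main obstacle: one must check that $\text{st}_A$ sends the lower block $B$ to $\{1,\dots,|B|\}$ and the upper block to $\{|B|+1,\dots,|A|\}$ inside $[\pm|A|]$, so that the $\nu$-part of $\text{st}_A(\lambda|_A)$ is precisely $\text{st}_C(\nu|_C)$ shifted up by $|B|$. Once this is verified term-by-term, the two sides agree and $\Delta$ is an algebra homomorphism.

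Finally, with $\mathbf{SC}^D$ established as a graded connected bialgebra, the antipode comes for free: every graded connected bialgebra admits a unique antipode, defined recursively on the grading via Takeuchi's formula, so the requirement of Definition \ref{hopf_algebra} is met and $\mathbf{SC}^D$ is a Hopf algebra. I would present this last step as an appeal to the standard result rather than recomputing the antipode, though one could alternatively exhibit $S$ explicitly in the $P$-basis using the product and coproduct formulas of Proposition \ref{pbasis}.
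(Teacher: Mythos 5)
Your proposal is correct and follows essentially the same route as the paper's proof: expand both sides in the $P$-basis, match the index sets via the correspondence $A\leftrightarrow(B,C)$ (valid because $\mu\sqcup\nu\uparrow^{k}$ has no arcs joining the two blocks), and identify summands using the compatibility of the standardization maps with disjoint union and the shift $\uparrow$. Your explicit appeal to the standard antipode result for graded connected bialgebras makes precise a step the paper leaves implicit.
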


\begin{proof}
We prove only the compatibility relation between the product and the coproduct as explained in the previous paragraph. Let $\lambda\in D_{2k}(q),\mu\in D_{2(n-k)}(q)$, then
\begin{align*}
\Delta(P_{\lambda})\cdot\Delta(P_{\mu}) &= \left(\displaystyle\sum_{\substack{\lambda = \tau_1\sqcup\sigma_1\\ B\subseteq[k]}}P_{\text{st}_B(\tau_1)}\otimes P_{\text{st}_{B^c}(\sigma_1)}\right)\left(\displaystyle\sum_{\substack{\mu =\tau_2\sqcup\sigma_2\\ C\subseteq[n-k]}}P_{\text{st}_C(\tau_2)}\otimes P_{\text{st}_{C^c}(\sigma_2)}\right)\\
&= \displaystyle\sum_{\substack{\lambda=\tau_1\sqcup\sigma_1\\B\subseteq[k]}}\displaystyle\sum_{\substack{\mu=\tau_2\sqcup\sigma_2\\C\subseteq[n-k]}}P_{\text{st}_B(\tau _1)}P_{\text{st}_C(\tau _2)}\otimes P_{\text{st}_{B^c}(\sigma _1)}P_{\text{st}_{C^c}(\sigma_2)}\;\;\;\;\;\;\;\;\;\;\;\;\;\;\;\;\;\;\;\;\;\;\;\;\;\;(3.9)\\
&= \displaystyle\sum_{\substack{\lambda=\tau_1\sqcup\sigma_1\\B\subseteq[k]}}\displaystyle\sum_{\substack{\mu=\tau_2\sqcup\sigma_2\\C\subseteq[n-k]}}P_{\text{st}_B(\tau _1)\sqcup\text{ st}_C(\tau_2\uparrow^{|B|})}\otimes P_{\text{st}_{B^c}(\sigma _1)\sqcup\text{st}_{C^c}(\sigma_2\uparrow^{|C|})}
\end{align*}
On the other hand, we have
\begin{align*}
\Delta(P_{\lambda}\cdot P_{\mu}) &=  \Delta(P_{\gamma})\;\;\;\;\;\;\;\;\;\;\;\;\;\;\;\;\;\;\;\;\;\;\;\;\;\;\;\;\;\;\;\;\;\;\;\;\;\text{ where }\gamma=\lambda\sqcup\;\mu\uparrow^{k}\;\;\;\;\;\;\;\;\;\;\;\;\;\;\;\;\;\;\;\;\;\;\;(3.10)\\
&= \displaystyle\sum_{\substack{\gamma = \tau\sqcup\sigma\\ A\subseteq[n]}}P_{\text{st}_A(\tau)}\otimes P_{\text{st}_{A^c}(\sigma)}\;\;\;\;\;\;\;\;\;\;\;\;\;\text{ with }\gamma|_{A}=\tau,\;\gamma|_{A^c}=\sigma\;\;\\
\end{align*}
\noindent Now, since $\gamma=\lambda\sqcup\mu\uparrow^{k}$, then we can decompose $\tau$ and $\sigma$ in the form
$$
\tau=\tau_1\sqcup\tau_2\hspace{3cm}
\sigma=\sigma_1\sqcup\sigma_2
$$
such that $\tau_1,\sigma_1$ only intersect $\lambda$ and similarly, $\tau_2,\sigma_2$ only intersect $\mu\uparrow^{k}$. This decomposition induces a decomposition on the set $A=B\sqcup C$, so that the last equality in $(3.10)$ becomes
\begin{align*}
\Delta(P_{\lambda}\cdot P_{\mu}) &= \displaystyle\sum_{\substack{\gamma =( \tau_1\sqcup\tau_2)\sqcup(\sigma_1\sqcup\sigma_2)\\ B\sqcup C\subseteq[n],}}P_{\text{st}_{B\sqcup C}(\tau_1\sqcup\tau_2)}\otimes P_{\text{st}_{B^c\sqcup C^c}(\sigma_1\sqcup\sigma_2)}\\
&= \displaystyle\sum_{\substack{\gamma =( \tau_1\sqcup\tau_2)\sqcup(\sigma_1\sqcup\sigma_2)\\ B\sqcup C\subseteq[n],}}P_{\text{st}_B(\tau_1)\sqcup\text{st}_C(\tau_2\uparrow^{|B|})}\otimes P_{\text{st}_{B^c}(\sigma_1)\sqcup\;\text{st}_{C^c}(\sigma_2\uparrow^{|B^c|})}\\
&= \displaystyle\sum_{\substack{\lambda=\tau_1\sqcup\sigma_1\\B\subseteq[k]}}\displaystyle\sum_{\substack{\mu=\tau_2\sqcup\sigma_2\\C\subseteq[n-k]}}P_{\text{st}_B(\tau _1)\sqcup\text{ st}_C(\tau_2\uparrow^{|B|})}\otimes P_{\text{st}_{B^c}(\sigma _1)\sqcup\;\text{st}_{C^c}(\sigma_2\uparrow^{|C|})}
\end{align*}
Putting together this last equality with equation $(3.9)$, we can conclude that the desired compatibility holds.
\end{proof}

This allows us to conclude that the space \textbf{SC}$^D$ is indeed a combinatorial Hopf algebra as defined in the preliminaries.

We want to point out that different definitions of a combinatorial Hopf algebra can be given depending on the purposes. An alternative definition is as follows. A Hopf algebra $\mathcal A$ is a combinatorial Hopf algebra if it is graded, connected and has a distinguished character $\zeta:\mathcal A\rightarrow \mathbb K$. This singled out character is given by the trivial character in the case when the Hopf structure on $\mathcal A$ arises from representation theory (see \cite{abs_hopf}). 

Notice that we have not given an explicit computation of the product in  \textbf{SC}$^D$ in terms of the supercharacter basis. That is, what are the structure coefficients of the supercharacter basis? Can we find a meaning for their structure constants from the point of view of representation theory? This is still a question that awaits for an answer.  The author believed that the structure constants in the supercharacter basis of type $D$ would resembled the behaviour of type $A$ but S. Andrews has proved this is not the case.

Looking at a small aspect of this question would be to compute the degree of the product of supercharacters. More precisely we have that for $\lambda\in D_{2n}(q)$ and $\mu\in D_{2m}(q)$
\begin{equation*}
\chi^{\lambda}(1)\cdot\chi^{\mu}(1)=q^{2m\alpha}\chi^{\lambda\sqcup\mu\uparrow n}(1)
\end{equation*}
\noindent where $\alpha=|\{\small{i}\larc{}\small{j}\in\lambda^+\;:\; n\prec j\}|$. 
In other words, the degree of the product is the product of the degrees up to a factor of $q$. 

We finish this paper by giving a brief outline concerning the types $B$ and $C$.
\section{Final comments}

\subsection{Type $B$ and type $C$}

Following the construction in \cite{andre_2}, supercharacters and superclasses for types $B$ and $C$ are also indexed by labelled partitions of the corresponding type. The unipotent upper triangular matrices of type $B$ is the group of $m\times m$ orthogonal matrices where $m=2n+1$ for some $n\in\mathbb Z_{\geq 0}$. We define $B_{m}(q)$-\emph{partitions} as labelled set partitions on the set $\{1,\dots , n,0,-n,\dots , -1\}$ satisfying the same properties as $D_{2n}(q)$-partitions with the additional property that we allow at most one arc of the form $\small{i}\larc{a}\small{0}$ together with $\small{0}\larc{-a}\small{-i}$. 

\noindent Unfortunately, any attempt from the author to construct a product on $\textbf{SC}^B$ fails since dealing with odd-size matrices make impossible an embedding from $\textbf{SC}^B_{2k+1}\times\textbf{SC}^B_{2l+1}$ to $\textbf{SC}^B_{2(k+l)+1}$, although a different grading and suitable changes could make it possible. On the other hand, we have a structure of $\textbf{SC}^D$-module on $\textbf{SC}^B$, since it is clear that $\textbf{SC}^D_{2k}\times\textbf{SC}^B_{2l+1}$ embeds into $\textbf{SC}^B_{2(k+l)+1}\in\textbf{SC}^B$.

For unipotent upper triangular matrices of type $C$ the situation is better. This type corresponds to the group of $2n\times 2n$ symplectic matrices and the combinatorial description for its supercharacter theory resembles the one for type $D$. In this case $C_{2n}(q)$-\emph{partitions} are defined as in Definition \ref{D-partition} but we also allow arcs ${\small{i}}\larc{a}\small{-i}$.  Similar arguments can be used in this case, producing a similar definition for product and coproduct over the graded vector space $\textbf{SC}^C$ endowing it with a Hopf algebra structure.

\subsection{Recent and forthcoming work}\label{future_work}

We remind that this paper has considered only the case when $\text{char}(\mathbb F_q)\geq 3$. The case $p=2$ requires a different description of the elements of the group $U^D_{2n}(q)$. We want to understand this case as well, since this might allow us to have an unlabelled version of what we have done here. 

On the other hand, a coarser version of the supercharacter theory of type $D$ as exposed here could have some connection with the case $q=2$. Namely, by lumping together conjugacy classes in $U^D_{2n}(q)$ through the action $(T_{2n}U_{2n}(q)AU_{2n}(q)T_{2n}^{-1}+I_{2n})\cap U^D_{2n}(q)$, where $T_{2n}$ is the subgroup of diagonal matrices of $GL_{2n}(q), A\in \mathfrak u^D(q)$, gives a coarser superclass theory, whose supercharacter values are integers. Hence, the unlabelled version of the Hopf algebra constructed here would realize the version given by this super-theory. This is inspired by the work done in \cite{bt}.

Finally, we want to point out that types $C$ and $D$ not only have a Hopf algebra structure, a Hopf monoid structure can be provided too. For a basic background on Hopf monoid in species the reader can consult \cite{aguiar:2011}. 

\noindent Briefly, let the species $\textbf{SC}^{D}$ be such that for a finite set $K$
$$
\textbf{SC}^{D}[K]=\displaystyle\oplus_{\phi\in L[K]}\textbf{SC}^{(\phi,D)}[K]
$$
where $L[K]$ is the set of linear orders on $K$ and $\textbf{SC}^{(\phi,D)}[K]$ being the set of $D_{2|K|}(q)$-partitions that respect the order given by $\phi$. In other words, let the set $K\sqcup\bar K$ be ordered by $\phi\cdot\overleftarrow\phi$ where $\cdot$ denotes concatenation and $\bar K$ is a second copy of $K$ with $\overleftarrow\phi$ being the order of $K$ reversed. Now, after drawing the arcs of $\lambda$ on top of $K\sqcup\bar K$ and putting $\bar\phi = \phi\cdot\overleftarrow\phi$ we ask that if $\small{i}\larc{a}\small{j}\in\lambda$ then $i\leq_{\bar\phi} j$; also $\lambda$ must satisfy the analog of condition $(b)$ in Definition \ref{D-partition}, replacing $\prec$ by $\leq_{\bar\phi}$.
Then we can check that for nonintersecting finite sets $I,J$ the following maps
\begin{align*}
m_{I,J} &:  \textbf{SC}^{(\phi,D)}[I]  \otimes  \textbf{SC}^{(\tau,D)}[J]\rightarrow\textbf{SC}^{(\phi\cdot\tau,D)}[I\sqcup J]\\
\Delta_{I,J} &: \textbf{SC}^{(\phi,D)}[I\sqcup J]\rightarrow  \textbf{SC}^{(\phi|_{I},D)}[I]\otimes \textbf{SC}^{(\phi|_{J},D)}[J]
\end{align*}
defined in analogy with the structure presented here, satisfy all the axioms required to make of the species $ \textbf{SC}^{D}$ a Hopf monoid. All of this is done following \cite{aguiar:2011} and it is part of the author's Ph.D. thesis. 

\section{Acknowledgements}
I would like to express my gratitude to my supervisor Nantel Bergeron for introducing me into the world of supercharacters and Hopf algebras. Also, I thank to Carlos Andr\'e, Nat Thiem and Scott Andrews for fruitful conversations and enlightening comments. Finally, many thanks to the reviewers for their significant comments that shed light on my work.

\footnotesize{

}

\end{document}